\def\qed{\hbox{${\vcenter{\vbox{		 
   \hrule height 0.4pt\hbox{\vrule width 0.5pt height 6pt
   \kern5pt\vrule width 0.5pt}\hrule height 0.4pt}}}$}}
\def\bE{\mathbb E}
\def\bP{\mathbb P}
\def\eps{\varepsilon}
\def\ov{\overline}
\def\uno{\mathds{1}}
\newcommand{\ste}[1]{\textcolor{black}{#1}}
\newcommand{\sO}[1]{\textcolor{black}{#1}}
\begin{document}

\title{Some aspects of the Markovian SIRS epidemic on networks and its mean-field approximation}

\author[1]{Stefania Ottaviano*}
\author[1]{Stefano Bonaccorsi}

\authormark{S. Ottaviano \textsc{et al}}

\address[1]{\orgdiv{Mathematics Department}, \orgname{University of Trento}, \orgaddress{\state{Via Sommarive 14, 38123 Povo}, \country{Trento, Italy}}}



\corres{*Corresponding author, \email{stefania.ottaviano@unitn.it}}


\abstract[Summary]{We study the spread of an SIRS-type epidemic with vaccination on network. Starting from an exact Markov description of the model, we investigate the mean epidemic
lifetime by providing a sufficient condition for fast extinction {that depends on the model parameters and the topology of the network.}
\ste{Then, we pass to consider a first-order mean-field approximation of the exact model and its stability properties, by relying on the graph-theoretical notion of equitable partition}. \ste{ In the case of graphs possessing this kind of partition, we prove that the endemic equilibrium can be computed by using a lower-dimensional dynamical
system. 
Finally, in the special case of regular graphs, we investigate the domain of attraction of the endemic equilibrium}. }

\keywords{Susceptible-infected-removed-susceptible model, Networks, Time to extinction, Equitable partition, Stability}

\jnlcitation{\cname{%
\author{S. Ottaviano}, and 
\author{S. Bonaccorsi}}
 (\cyear{2020}), 
\ctitle{Some aspects of the Markovian SIRS epidemic on networks and its mean-field approximation}, \cjournal{Math. Meth. Appl. Sci}, \cvol{-}.}

\maketitle


\section{Introduction}\label{sec1}

The spread and persistence of infectious diseases are a result of the complex
interactions between individual units (e.g. people, city, county, etc),
disease characteristics and possible control policies. 
Consequently, the aim of many mathematical models is to gain insight into how diseases
transmit and to identify the most effective strategies for their prevention and
control. 
%
%
%
Vaccination is considered to be the most effective intervention policy as well as a cost-effective strategy to reduce
both the morbidity and mortality of individuals.
Over the past few decades a high variety of compartmental models, where the population is divided
into different classes (compartments), depending on the stage of the disease, have been formulated. 
A relevant amount of these models, including those that incorporate a vaccination strategy, assumes a \emph{homogeneous mixing approximation} \cite{kribs2000simple, alex2004vaccination,elbasha2006theoretical, sun2010global, cai2018global}. Basically, individuals in the population interact with each other completely at random (with no preferential interaction). 
Although the simplicity of the model allows to include more specific characteristics, such as birth and deaths, vaccination by age  etc., 
the homogeneous mixing assumption ignores details such as geographical location, presence of community structures, or the specific role of each individual in the contagion spreading. 
However,
the underlying contact structure of the population plays a crucial role in the spreading of the epidemics     
\cite{boccaletti2006, pastor2015epidemic,kiss2017mathe}.

 Epidemic
models have also been used to describe a wide range of others phenomena.
like social behaviors, diffusion of information, computer viruses etc., indeed the dynamical behavior of these phenomena can be described by the same type of equations, although their basic mechanisms may differ \cite{pastor2015epidemic}.
For example, 
networks through which agents
communicate with one another are frequently used to propagate electronic viruses. Thus, epidemiological modeling method can help to understand how such
viruses spread on a network for building proper effective strategies to stem the viral prevalence, e.g. to implement antivirus techniques \cite{yang2017heterogeneous,balthrop2004technological}. 
For a review on epidemics models on networks see, e.g., \cite{PietSurvey,danon2011networks}. \ste{Spatially extended epidemiological processes are described also via superdiffusion, e.g., in \cite{skwara2018superdiffusion}.}

In our model, we classify each individual in the population according to her state:  susceptible, infected or recovered. 
An individual in the susceptible state can be infected if she is in contact with any infected individual (equivalently, they are adjacent nodes in the network). 
After the infection is over, the individual enters in the recovery state, and while in the recovery state, she cannot undergo to a new infection. 
However, in this  work we analyze a 
model where the recovery state is not permanent, hence the individual returns, after an exponentially distributed time,
to the susceptible state.

Moreover, a further mechanism exists that change the state of an individual, that is vaccination. Vaccination takes place for susceptible individuals who are moved directly to the recovery state.
We do not add a compartment for the vaccinated individuals, not distinguishing the vaccine-induced immunity from the natural one acquired after the virus contraction.
In several examples in applications, actually, vaccination does not confer a long-life immunity (in the field of infectious disease, think, e.g., to influenza, diphtheria, pertussis and pneumococcal vaccine).


Overall, the model we consider can be classified as a SIRS \emph{susceptible-infected-removed-susceptible} model \emph{with vaccination}, on networks, that we shall refer to with SIRS$_{v}$. 
Moreover, we adopt an individual (node)-based approach, %
see also \cite{Turri, yang2017heterogeneous} for SIRS-type node-based models. 
 As opposite, a large part of the literature 
consider models in which the structure of the network is simplified by using
 a degree-based mean-field (DBMF) approach, \ste{ \cite{chen2014SirsVaccNet,liu2019SIRSnetl,liu2017analysis,yu2015dynamical}, where all nodes with the same degree are assumed to be statistically equivalent.} 
\ste{Thus, these kinds of models only reflect the evolution in time of the
 fraction of nodes with a certain degree $k$ in each compartment, while neglecting the
states of each single individual. 
 This leads to a loss of detailed features of network topologies resulting in difficulties for a deep understanding of the effect of a particular topology on the infection propagation.}

\subsection{Outline and main results.}

 In Sec.\ref{exact}, we start considering the exact stochastic SIRS model with vaccination. As stated before, we have a population of $N$ individuals where each of them can be classified in one of the three states, $S$, $I$ or $R$. Therefore, the process describing the spreading of the epidemics among the population counts $3^N$ possible states.
Our system evolves as a continuous-time Markov chain: all the involved processes, vaccination, infection, recovery and loss of immunity, are thought as independent Poisson processes
each with its own rate (that allows to jump from a state to another). This approach describes the global change in the state probabilities of the network exactly. 


In this context, we investigate the mean time in which the epidemic is active (at least one node is infected),  trying to understand in which way the network topology, and the parameters of the model, are responsible for a quick epidemic extinction. 
We also provide some numerical investigations to assess the role of the {immunity-loss} parameter in the extinction mean time.

The exponential growth of the state space with $N$ makes the search for solution neither analytically
nor computationally tractable, except for very small networks. Hence, it is necessary to derive an approximation of the original model. A direct approach for deriving an approximate model is to start from a node-level description of the underlying exact stochastic process (Sec.\ref{nodelev}), as proposed in
\cite{PVM_GEMF}, and then, through a first-order mean-field approximation, 
obtain a set of $3N$ nonlinear
differential equations, specifying the state probabilities of each node (Sec. \ref{Secmeanf}). Basically, we consider an extension of the N-intertwined mean-field approximation (NIMFA), provided for the SIS and SIR models in \cite{VanMieghem2009} and \cite{SIRscoglio} respectively, to a SIRS model (with vaccination). 

In Sec. \ref{Stab}, \ste{we deal with the stability properties of the system obtained by means of the approximation. We consider the stability results in \cite{yang2017heterogeneous}, where the authors study the heterogeneous version of our node-based SIRS model. Thus, based on their results adapted to the homogeneous case, we provide the critical epidemic threshold which separates an extinction region from an endemic one in terms of the parameters of the model and the network topology.}

\ste{At this point, we focus on the global asymptotic stability (GAS) of the endemic equilibrium. 
 In
\cite{yang2017heterogeneous}, the authors 
provide a sufficient condition,  that depends on the network topology and the model parameters, for the global attractivity of the endemic equilibrium, above the epidemic threshold.}
 {However, we have not been able to find any graphs and set of parameters for which this condition is valid; even in \cite{yang2017heterogeneous} the authors do not provide numerical examples in which their condition holds. Further, 
we show that in the homogeneous setting, the sufficient condition provided in \cite{yang2017heterogeneous} is never satisfied in the case of regular graphs. }

\ste{
We underline that, to the best of our knowledge, the GAS of the endemic equilibrium for an individual-based SIRS model of our kind is still an open question, in that there are only partial results with additional strict restrictions on the model parameters (just as in \cite{yang2017heterogeneous}). The same goes for the case of a multigroup SIRS model, see e.g., \cite{lin1993global,muroya2013global}}. 

\ste{In the case of the DBMF approach, GAS of the endemic equilibrium is proved in \cite{chen2014SirsVaccNet}, under the only restriction of having a recovery rate higher than the vaccination rate. However, in a DBMF model, the assumption that all nodes with the same degree are considered stochastically equivalent simplifies the analysis, and allows to prove the GAS by means of a Lyapunov function consisting of
quadratic functions and a Volterra type function of the same kind of those used, e.g., in \cite{mena1992dynamic,shuai2013global} }.

{For these reasons, we think that it is interesting to better understand the attractiveness properties of the endemic equilibrium for our kind of model. Considering that the sufficient condition for the global attractivity in \cite{yang2017heterogeneous} does not hold for regular graphs
 in Sec. \ref{attr}, we focus on the domain of attraction of the endemic equilibrium for these specific graphs}. For this purpose, we use the notion of equitable partition \cite{godsil, QEP}. Thus, first we prove the existence of a positively invariant set for the system when a graph posses an equitable partition, then we show that, when the initial conditions belong to this set, the whole epidemic dynamics can be expressed by a reduced system of $3n$ equations, where $n < N$. 
Moreover, this invariant set contains the endemic equilibrium (besides the disease-free equilibrium) that can be computed by means of the reduced system. Since a regular graph is a special case of graph with equitable partition, we show that, when the recovery rate is higher than the vaccination rate, the aforementioned invariant set is contained in the domain of attraction for the endemic equilibrium. 
Finally, in Sec. \ref{numInv}, we provide some numerical investigations. 

\section{The Exact Model}\label{exact}

We consider a continuous-time Markovian \emph{susceptible-infected-removed-susceptible} (SIRS) \emph{model with vaccination}, on networks. Specifically, the epidemics spreads over an undirected connected graph $G=(V,E)$, where the node set $V$ represents the individuals in the population and the links between nodes are specified by the edge set $E$. The connectivity of $G$ is conveniently
expressed by the symmetric $N \times N $ adjacency matrix $A$. 
 \\
Each node can be, at time $t$, in one of the three states $S, I$, or $R$ with a certain probability. 
The state
of a node $i$, at time $t$, will be denoted by the random variable $X_i(t)$.
We assume that the infection process
is a per link Poisson process where the infection rate between a susceptible and an infected node is $\beta$. The recovery process of an infected node is poissonian too, with rate $\delta$, and once cured the individual pass from the state $I$ to $R$. We denote by $\tau=\beta/\delta$ the so-called \emph{effective infection rate}.
In a SIRS model the immunity acquired after receiving the infection is temporary (unlike the most studied SIR model). 
A recovered individual stays in the state $R$ for an exponentially distributed time with mean $1/\gamma$, before returning to the susceptible state. In addition, we include the possibility of vaccination for a healthy individual.  
We assume that each susceptible can receive vaccination at a constant rate $\sigma$ (again we have a Poisson process for vaccination), and that the vaccine is totally effective in preventing infection, although it does not provide a long-life immunity.  We do not distinguish the vaccine-induced immunity from the natural one acquired after the contraction of the disease. 
 Namely, we do not consider a vaccination state into the basic model, but the vaccinated individual pass to the state $R$. 
Thus, each individual loses the immunity either given by the vaccine or by recovering with the same rate $\gamma$. 
All the involved Poisson processes are independent. 

The state of the network $Y(t)$ at time $t$ is defined by all
possible combinations of states in which the $N$ nodes can be
at time $t$.
Let us denote the $3^N$ possible configurations that the state $Y(t)$ can assume by

$$Y_k=(X_N, \ldots, X_1),$$ 
where  $X_i \in \{S,I,R\}$
 represents the state of node $i$, and $k=0, \ldots, 3^N-1$. 
 We label the state in this way: by setting $S=0, I=1$, and $R=2$,  we can consider the vector state $Y_k$ as the ternary representation of $k$, that is $k=\sum_{i=1}^N X_i 3^{i-1}$.

The epidemics process can be described by a continuous-time
Markov chain with $3^N$ states specified by the infinitesimal
generator $Q$ with elements

\begin{equation*}\label{QSirsV}
q_{zj}=
\begin{cases}
\delta, & \qquad \text{if}\; z=j-3^{m-1} \land X_m=1; \; m=1, \ldots, N\\
\beta \sum_{i=1}^N a_{m i}\mathds{1}_{\{X_i=1\}}, &  \qquad \text{if}\; z=j-3^{m-1} \land X_m=0; \; m=1, \ldots, N\\
\gamma, & \qquad \text{if}\; z=j+ 2 \cdot 3^{m-1} \land X_m=2 ; \; m=1, \ldots, N\\
\sigma , &\qquad \text{if}\; z=j- 2 \cdot 3^{m-1}  \land X_m=0; \; m=1, \ldots, N\\
-  \sum_{i=0; i \neq z}^{3^N-1 }q_{z i}, & \qquad \text{if}\; z=j\\
0, & \qquad \text{otherwise}
\end{cases}
\end{equation*}
where, $X_m$ is the state of node $m$ in the network state $z$.\\
Let us note that  the set of all states with no infected individuals, that is those states $Y_k$, where $X_i \in \{0,2\}$, for all $i=1, \ldots, N$, forms a final class. This differentiates the SIRS$_{v}$ model from the standard SIRS one,  
where there is only one absorbing state, that is $Y_0=(X_N=S, X_{N-1}=S, \ldots,X_2=S, X_1=S)$.

%
%

Conversely, the set of states where $X_i=1$, for some $i$, forms a transient class. Standard results in Markov theory implies that the process will enter the final class in finite time, P-a.s., which is equivalent to say that the epidemic reaches the extinction (no more infected nodes) almost surely. 

Let us define the probability
state vector 

\begin{equation*}\label{psv}
v(t)=(v_0(t), \ldots, v_{3^N-1}(t)),
\end{equation*}

 with components

$$v_k(t)=\bP(Y(t)=Y_k)  .$$


The rate of
change of every network state is given by the following differential equation:
\begin{equation}\label{exactSIRS}
\frac{d v^T(t)}{dt}=  Q v^T(t),
\end{equation}

whose solution is

\begin{equation*}\label{sol}
v^T(t)= e^{Qt} v^T(0).
\end{equation*}

The system \eqref{exactSIRS} fully describes the Markov process, 
however the number
of equations increases exponentially with the number of nodes; this poses
several limitations in order to determine the 
 solutions even for small
networks. Hence, often, it is necessary to formalize models that are an
approximation of the original one, but that allow a better analytical and numerical analysis.
A direct approach for deriving an approximate model is to start from a node
level description of the underlying stochastic process, that we report in the next section. 
Then, through a mean-field type approximation (see Sec.~\ref{Secmeanf}), it is possible to obtain a reduced set of $3N$ nonlinear
differential equations describing the time-change of the state probabilities of each
node.

\subsection{Node-level Markov description of the SIRS process with vaccination}\label{nodelev}

Alternatively to the approach adopted in the previous section, we can 
describe the spreading process by a node-level approach, i.e.,
by specifying the 
probability for each node $i$ to move from a state to another, conditioned on
the network state $Y(t)$ 
\cite{PVM_GEMF}. Given a node $i$, we shall denote in the sequel $Y_{-i}(t)$ the state of all other nodes $j \ne i$ in the network.
 
{We can consider the representation for finite state Markov processes by means of all the involved Poisson processes in the model  \cite{bremaud1999markov}. }
For a susceptible individual, the process of being infected by one infected neighbor, during the interval time $(t, t+dt]$ is independent of the process of receiving infection from another neighbor. Indeed, all the infected neighbors compete with each other and the susceptible node become infected when one of the neighbors succeeds in transmitting the infection. 
Now, let us define
$\mathds{1}_{\{E\}}$ the indicator random variable (which
equals one if the condition $E$ is true, else it is zero). 
Since for the Poisson processes the probability that $q$ events occur in
a time interval ${d}t$ is of order $({d}t)^q$, we can write the probability of
having an infection for the node $i$, during the time interval $(t, t + dt]$, as
\begin{equation}\label{P1}
\bP(X_i(t+dt)=I|X_i(t)=S, Y_{-i}(t))=\beta \sum_{j=1}^N  a_{ij}\uno_{\{X_j(t) =I\}} dt + o(dt),
\end{equation}
since the sum of independent Poisson processes (i.e., the infection processes) is again a Poisson
process with 
 rate
equals to the sum of the individual rates. 
The probability of not having a transition from the infected state to the removed state, during  $(t, t+dt]$, is:
\begin{equation}\label{P2}
\bP(X_i(t+dt)=I|X_i(t)=I, Y_{-i}(t))=1 -\delta dt + o(dt).
\end{equation}
Then from \eqref{P1} and \eqref{P2}, we have
\begin{align}\label{P1P2}
\bP(X_i(t+dt)=I| Y(t)) &=\mathds{1}_{\{X_i(t)=S \}}\beta \sum_{j=1}^N  a_{ij}\mathds{1}_{\{X_j(t)=I \}}dt \\ \nonumber
&+ \mathds{1}_{\{X_i(t)=I \}}(1 - \delta dt)+o(dt).
\end{align}
By noticing that
$$\bP(X_i(t+dt)=I| Y(t))=\bE[\mathds{1}_{\{X_i(t+dt)=I \}}|Y(t)],$$
 then, if we compute the expected value of each side of \eqref{P1P2},  by the law of iterated expectation, we get
\begin{equation*}
\bE[\mathds{1}_{\{X_i(t+dt)=I \}}]= \bE \left[\mathds{1}_{\{X_i(t)=S \}} \beta \sum_{j=1}^N  a_{ij}\mathds{1}_{\{X_j(t)=I \}}\right]dt 
+ \bE \left[\mathds{1}_{\{X_i(t)=I \}} \right](1- \delta dt) + o(dt).
\end{equation*}
After dividing both members by $dt$ and letting $dt \rightarrow 0$, we have, by exploiting again the properties of the indicator random variable
\begin{equation}\label{exactI}
\frac{d \bP(X_i(t)=I )}{dt} = \beta  \sum_{j=1}^N  a_{ij} \bP(X_i(t)=S, X_j(t)=I) -\delta \bP(X_i(t)=I).
\end{equation}
The probability to be recovered, for node $i$, during the interval time $(t, t+dt]$ is
\begin{equation*}
\bP(X_i(t+dt)=R|X_i(t)=I, Y_{-i}(t))=\delta dt + o(dt).
\end{equation*}
The probability to get vaccinated during $(t, t+dt]$ is
\begin{equation*}
\bP(X_i(t+dt)=R|X_i(t)=S, Y_{-i}(t))=\sigma dt + o(dt),
\end{equation*}
and, finally, the probability that no transition from the removed state happens (that is no loss of immunity occurs) during $(t, t+dt]$, is
\begin{equation*}
\bP(X_i(t+dt)=R|X_i(t)=R, Y_{-i}(t))=1 -\gamma dt + o(dt).
\end{equation*}
Thus, proceeding as above, we have 
\begin{equation}\label{eq_rec}
\frac{d \bP(X_i(t)=R)}{dt} = \delta \bP(X_i(t)=I) + \sigma \bP(X_i(t)=S) - \gamma \bP(X_i(t)=R).
\end{equation}
With the same arguments as before, we can also discuss the variation of the probability to be in the susceptible state, to get 
\begin{equation}\label{exactS}
\frac{d \bP(X_i(t)=S)}{dt}=- \beta  \sum_{j=1}^N  a_{ij} \bP(X_i(t)=S, X_j(t)=I)  + \gamma \bP(X_i(t)=R) -\sigma \bP(X_i(t)=S). 
\end{equation}

It seems that we have described the dynamic of the system by means of $3N$ equations in the unknowns $\bP(X_i(t)=x)$, $i=1, \dots, N$, $x = S, I, R$.
Unfortunately, equations \eqref{exactI} and \eqref{exactS} are not closed since they contain the 
joint probabilities $\bP(X_i(t)=S, X_j(t)=I)$. 
We can show that it is possible to derive a system of differential equations for each joint probability, but even those are not closed, since they involve higher order joint probabilities. In the end, again, a system of $3^N$ linear
equations appears and, as for \eqref{exactSIRS}, for large values of $N$ the system is neither analytically
nor computationally tractable \cite{PVM_GEMF}. Instead, to reduce the $3^N$ state-space size, in Sec.~\ref{Secmeanf}, we adopt a closure
 approximation technique to obtain a system of $3N$ differential equations.

\subsection{Time to extinction for the SIRS model}


In this section, we 
use the dynamic described in equations \eqref{exactI}-\eqref{eq_rec}-\eqref{exactS} and discuss the average lifetime of the epidemics before its extinction
(which occurs with probability 1, since the class $Y^0 = \{Y_k : X_i \ne I,\ i=1,\dots, N\}$ is final).
Our aim is to find conditions for a
quick extinction in order to avoid a long-term epidemic persistence.

First, let us investigate the average time the epidemic is active, that is, at least one node is infected.
We focus on the SIRS model with $\sigma=0$, (\sO{although 
 the sufficient condition \eqref{threshold1} for fast extinction also applies when $\sigma >0$}), and consider $Y^0$, the set of the states with no infected nodes, which we refer to as the \emph {final set}. \\
The next proposition gives us un upper bound on $\bP \left(\sum_{i=1}^N\uno_{\{X_i(t)=I \}}>0 \right) $, that is the probability of not being in the final set $Y^0$, at time $t$. 

\begin{proposition}\label{finalSet}
\sO{Let $A$ be the adjacency matrix of the graph $G$, and $\lambda_1(A)$ its spectral radius. Then, for any initial condition $X_0=(X_1(0), \ldots, X_N(0))$, and all $t \geq 0$,  it holds:
\begin{equation*}
\bP \left(\sum_{i=1}^N\uno_{\{X_i(t)=I \}}>0 \right) \leq  \sqrt{N \sum_{i=1}^N {\uno_{\{X_i(0) = I\}}}} \exp((\beta \lambda_1(A)-\delta)t) .
\end{equation*}}
\end{proposition}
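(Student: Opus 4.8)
The plan is to work with the marginal infection probabilities $p_i(t) = \bP(X_i(t)=I)$ and reduce the statement to a linear differential inequality that can be controlled by the spectral radius $\lambda_1(A)$. First I would start from the exact (non-closed) equation \eqref{exactI} and discard the correlations by the elementary bound $\bP(X_i(t)=S, X_j(t)=I) \leq \bP(X_j(t)=I) = p_j(t)$. Substituting this into \eqref{exactI} gives the entrywise differential inequality
\begin{equation*}
\frac{d}{dt} p(t) \leq (\beta A - \delta I)\, p(t), \qquad p(t) = (p_1(t), \ldots, p_N(t))^T,
\end{equation*}
the inequality being understood componentwise. Note that \eqref{exactI} does not involve $\sigma$, which is exactly why the resulting bound is insensitive to the vaccination rate and remains valid for $\sigma > 0$.

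The matrix $M := \beta A - \delta I$ is a Metzler matrix (its off-diagonal entries $\beta a_{ij}$ are nonnegative), so the flow $e^{Mt}$ is a nonnegative matrix preserving the positive orthant. Invoking the standard comparison principle for cooperative linear systems, the differential inequality above together with the nonnegativity of $p(0)$ yields
\begin{equation*}
0 \leq p(t) \leq e^{Mt} p(0) = e^{-\delta t}\, e^{\beta A t}\, p(0), \qquad t \geq 0 .
\end{equation*}
For a deterministic initial configuration $X_0$ we have $p_i(0) = \uno_{\{X_i(0)=I\}} \in \{0,1\}$, hence $\|p(0)\|^2 = \sum_{i=1}^N \uno_{\{X_i(0)=I\}}$.

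Finally I would pass from the marginals to the non-extinction event by a first-moment (union) bound: since the number of infected nodes is a nonnegative integer-valued random variable,
\begin{equation*}
\bP\left(\sum_{i=1}^N \uno_{\{X_i(t)=I\}} > 0\right) \leq \bE\left[\sum_{i=1}^N \uno_{\{X_i(t)=I\}}\right] = \sum_{i=1}^N p_i(t) = \uno^T p(t),
\end{equation*}
with $\uno$ the all-ones vector. Combining this with the comparison estimate and using that $A$ is symmetric, Cauchy--Schwarz together with the operator-norm identity $\|e^{\beta A t}\| = e^{\beta\lambda_1(A)t}$ gives
\begin{equation*}
\uno^T e^{\beta A t} p(0) \leq \|\uno\|\; \|e^{\beta A t}\|\; \|p(0)\| = \sqrt{N}\; e^{\beta\lambda_1(A)t}\; \sqrt{\sum_{i=1}^N \uno_{\{X_i(0)=I\}}} .
\end{equation*}
Multiplying by the prefactor $e^{-\delta t}$ and collecting the two square roots into $\sqrt{N\sum_{i=1}^N \uno_{\{X_i(0)=I\}}}$ reproduces exactly the claimed bound.

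The conceptual heart of the argument is the comparison step: I must justify that the entrywise inequality $\dot p \leq M p$ forces $p(t)\leq e^{Mt}p(0)$ entrywise. This is standard for cooperative (Metzler) systems and relies crucially on $e^{Mt}$ having nonnegative entries; a clean way to see it is to set $w(t)=e^{Mt}p(0)-p(t)$, observe $w(0)=0$ and $\dot w \geq M w$, and apply the positivity-preserving property of the cooperative flow to conclude $w(t)\geq 0$. The remaining ingredients---the first-moment bound, the symmetry of $A$, and the Cauchy--Schwarz/spectral-norm estimate---are routine, and the square-root shape of the bound arises simply from $\|\uno\|=\sqrt{N}$ and, for a deterministic initial datum, $\|p(0)\|^2 = \sum_{i=1}^N \uno_{\{X_i(0)=I\}}$.
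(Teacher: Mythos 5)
Your proof is correct and follows essentially the same route as the paper's: the bound $\bP(X_i(t)=S,X_j(t)=I)\le \bP(X_j(t)=I)$ is exactly what the paper obtains by the law of total probability (dropping the nonnegative joint-probability terms), and the subsequent steps---the entrywise inequality $\dot p \le (\beta A-\delta \mathbb I_N)p$, the union/first-moment bound on the non-extinction event, and the Cauchy--Schwarz plus spectral-norm estimate using the symmetry of $A$---coincide with the paper's argument. The only difference is that you explicitly justify the comparison step $p(t)\le e^{(\beta A-\delta \mathbb I_N)t}p(0)$ via the Metzler/cooperative-flow property, a point the paper asserts without proof, so your write-up is if anything slightly more complete.
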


\begin{proof}

Let us consider the equation  \eqref{exactI}, by invoking the law of total probability, it can be rewritten as (\cite{van2014upper})
\begin{align*}
\frac{d \bP(X_i(t)=I )}{dt} &= \beta  \sum_{j=1}^N  a_{ij} \bP(X_j(t)=I) - \delta \bP(X_i(t)=I) \\ 
&- \beta  \sum_{j=1}^N  a_{ij} \bP(X_i(t)=I, X_j(t)=I)  - \beta  \sum_{j=1}^N  a_{ij} \bP(X_i(t)=R, X_j(t)=I), 
\end{align*}
 for $i=1, \ldots, N$. Consequently,
\begin{equation}\label{diseq_inf}
\frac{d \bP(X_i(t)=I )}{dt} \leq \beta  \sum_{j=1}^N  a_{ij} \bP(X_j(t)=I) - \delta \bP(X_i(t)=I),
\end{equation}
that written in matrix form is
\begin{equation*}
\frac{d P(t)}{dt} \leq ( \beta A -\delta\mathbb I_N) P(t),
\end{equation*}
where $P(t)=\left[ \bP(X_1(t)=I),  \ldots ,\bP(X_N(t)=I)  \right]^T$ and $\mathbb I_N$ is the identity matrix with dimension $N$.
The solution of the linear differential inequality above for the vector of infection probabilities is
\begin{equation*}
P(t) \leq \exp(t (\beta A -\delta \mathbb I_N ))P(0),
\end{equation*}
where $P(0)$ is determined by means of the initial condition $X_0$.  In the sequel we let $u$ be the all-one row vector.
We notice that, for any $i=1,\dots, N$,
$$ \bP(X_i(t)=I)= \sum_{Y_k: X_i = I}  \bP(Y(t)=Y_k);$$
further,
\begin{align*}
\bP(Y(t) \not\in Y^0) =& \sum_{Y_k \not\in Y^0} \bP(Y(t)=Y_k)
\le \sum_{i=1}^N \sum_{ Y_k: X_i = I} \bP(Y(t)=Y_k)
\\
= & \sum_{i=1}^N \bP(X_i(t)=I)
\le \sum_{i=1}^N \big(\exp(t (\beta A -\delta \mathbb I_N ))P(0) \big)_i
\\
= & u \cdot \exp(t (\beta A -\delta \mathbb I_N ))P(0).
\end{align*}
By invoking the Cauchy-Schwarz inequality and considering that the matrix $A$ is symmetric, we obtain (see \cite[Thm 8.2]{draief2010epidemics})
\begin{align*}
\bP  \left(\sum_{i=1}^N\uno_{\{X_i(t)=I \}}>0 \right)  & \leq ||u||_2 \exp((\beta \lambda_1(A) -\delta )t)||P(0)||_2 \\
& = \sqrt{N \sum_{i=1}^N \uno_{\{X_i(0) = I\}}} \exp((\beta \lambda_1(A)-\delta)t)
\end{align*} 
as claimed.
\end{proof}

\begin{corollary}\label{mean_timeFS}
\sO{
Let $\tau^{FS}$ denote the hitting time to the final set $Y^0$. Then, under the condition
\begin{equation}\label{threshold1}
\frac{\beta}{\delta} < \frac{1}{\lambda_1(A)}
\end{equation}
it holds that 
\begin{equation*}
\bE(\tau^{FS}) \leq \frac{\log(N) +1}{\delta - \beta \lambda_1(A)}.
\end{equation*}}
\end{corollary}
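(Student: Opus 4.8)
The plan is to recover $\bE(\tau^{FS})$ from the tail bound of Proposition~\ref{finalSet} via the elementary identity $\bE(\tau^{FS}) = \int_0^\infty \bP(\tau^{FS} > t)\,dt$, valid for any nonnegative random variable. The first step is to identify the survival function of $\tau^{FS}$ with the quantity already estimated. Because the final set $Y^0$ forms a final (absorbing) class, the process has not reached $Y^0$ by time $t$ if and only if some node is still infected at time $t$; hence
\begin{equation*}
\bP(\tau^{FS} > t) = \bP\Big(\sum_{i=1}^N \uno_{\{X_i(t)=I\}} > 0\Big) \le C\,\exp\big(-(\delta - \beta\lambda_1(A))t\big),
\end{equation*}
where $C = \sqrt{N \sum_{i=1}^N \uno_{\{X_i(0)=I\}}} \le N$ and, by hypothesis~\eqref{threshold1}, the rate $r := \delta - \beta\lambda_1(A)$ is strictly positive.

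The second step is the actual integration, where the only subtlety is that integrating the exponential bound alone over $[0,\infty)$ gives $C/r$, which is wasteful. The key refinement is to combine the exponential estimate with the trivial bound $\bP(\tau^{FS} > t) \le 1$, splitting the integral at the crossover time $t^\star = (\log C)/r$ at which $C e^{-r t^\star} = 1$ (note that $C \ge 1$ whenever the initial condition has at least one infected node, so $t^\star \ge 0$). On $[0,t^\star]$ I would bound the survival function by $1$, contributing $t^\star = (\log C)/r$; on $[t^\star,\infty)$ I would use the exponential bound, contributing $\int_{t^\star}^\infty C e^{-rt}\,dt = C e^{-r t^\star}/r = 1/r$. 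Adding the two pieces yields $\bE(\tau^{FS}) \le (\log C + 1)/r$, and replacing $C$ by the crude bound $N$ gives exactly the claimed estimate $(\log N + 1)/(\delta - \beta\lambda_1(A))$.

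I do not expect a genuine obstacle: this is the standard device of truncating the tail integral at the point where the exponential bound reaches $1$ (as in \cite{draief2010epidemics,van2014upper}). The two points that require a little care are the identification $\{\tau^{FS} > t\} = \{Y(t)\notin Y^0\}$, which hinges on $Y^0$ being absorbing, and the verification that $C \le N$, so that $\log C$ may be replaced by $\log N$ in the final bound.
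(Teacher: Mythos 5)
Your proof is correct and follows essentially the same route as the paper: both rewrite $\bE(\tau^{FS})=\int_0^\infty \bP(\tau^{FS}>t)\,dt$, identify the survival probability with the bound from Proposition~\ref{finalSet}, and truncate the tail integral at the time where the exponential bound crosses $1$. The only cosmetic difference is that you split at $(\log C)/r$ and relax $C\le N$ at the end, while the paper replaces the constant by $N$ first and splits at $(\log N)/r$; the resulting estimate is identical.
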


\begin{proof}
Following the proof of \cite[Cor. 8.6]{draief2010epidemics} we have
\begin{equation*}\label{tfs}
\bE(\tau^{FS})= \int_0^\infty \bP( \tau^{FS}>t) dt 
=  \int_0^\infty  \bP \left(\sum_{i=1}^N\uno_{\{X_i(t)=I \}}>0 \right)  dt 
 \leq  \int_0^\infty \min \{1, N \exp(-(\delta- \beta \lambda_1(A))t)\} dt.
\end{equation*}
Since $N \exp(-(\delta- \beta \lambda_1(A))t) < 1$ when $t  > \log(N)/(\delta -\beta \lambda_1(A))=t^* $ 
 we can split the intervals of integration in $[0, t^*]$ and $[t^*, \infty]$, obtaining that

$$\bE(\tau^{FS}) \leq t^* + \frac{N}{\delta- \beta \lambda_1(A)} \exp(-(\delta- \beta \lambda_1(A))t^*)=\frac{\log(N) +1}{\delta - \beta \lambda_1(A)}.$$
\end{proof}

The above result states that if we consider a sequence of graphs $G_N$ on $N$ nodes, 
for instance regular graphs with fixed degree $k$ (notice that they share the same spectral radius $\lambda_1(A_N) = k$),
then the condition
$\displaystyle \delta - \beta \lambda_1(A_N) \ge c > 0$, for some constant $c$, 
implies that the expected time to the infection eradication grows at most logarithmically in $N$. 
In this setting, for large $N$, by using Markov's inequality we have that the time to eradication is of order $(\log(N))^\alpha$ with high probability, for any $\alpha > 1$.

{The result in Proposition \ref{finalSet} implies that the condition \eqref{threshold1} is sufficient for fast extinction}.  
This coincides with what is known for the SIS model in \cite[Thm 8.2]{draief2010epidemics} where the bound is over the probability that at time $t$ the process has not yet reached the absorbing state (the overall-healthy state).
%


\sO{\textit{Time to absorbing state}.} In the previous section we have considered the probability of the persistence of the epidemics (meaning that at least one infected node remains in the network) and the mean time to hit the final set, where there are no more infectious nodes. \sO{Now, instead we want to consider the probability of no absorption for the SIRS model ($\sigma=0$), that is the probability that the process is not in the zero state, where all nodes are susceptible}.
%
%
%
\begin{proposition}
\sO{Under the same hypothesis of the Prop. \ref{finalSet}, and assuming  that $- \gamma$ does not belong to the spectrum of $\beta A - \delta \; \mathbb I_N$,  it holds that}

\sO{\begin{equation*}
\bP \left(\sum_{i=1}^N X_i(t) > 0 \right) \leq C \;\sqrt{N \sum_{i=1}^N \uno_{\{X_i(0) = I \lor R\}}} \; \exp (\max\{\beta \lambda_1(A)-\delta, -\gamma\} \;t),
\end{equation*}}
\sO{where $C$ is a positive constant that depends on the adjacency matrix $A$, and on the parameters $\beta, \delta, \gamma$}.

  \end{proposition}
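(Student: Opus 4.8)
The plan is to track the infection and recovery probability vectors jointly and reduce the claim to a matrix-exponential estimate for a block-triangular generator. I would keep $P(t)=[\bP(X_1(t)=I),\dots,\bP(X_N(t)=I)]^T$ as in Proposition \ref{finalSet} and introduce $\rho(t)=[\bP(X_1(t)=R),\dots,\bP(X_N(t)=R)]^T$. From the differential inequality \eqref{diseq_inf} I already have $\dot P(t)\le(\beta A-\delta\mathbb I_N)P(t)$, while equation \eqref{eq_rec} with $\sigma=0$ gives the exact identity $\dot\rho(t)=\delta P(t)-\gamma\rho(t)$. Stacking $W(t)=(P(t)^T,\rho(t)^T)^T\in\bR^{2N}$, these combine into the componentwise differential inequality $\dot W(t)\le M W(t)$, where
\[
M=\begin{pmatrix}\beta A-\delta\mathbb I_N & 0\\ \delta\mathbb I_N & -\gamma\mathbb I_N\end{pmatrix}.
\]

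First I would observe that $M$ is a Metzler matrix: all its off-diagonal entries — the $\beta a_{ij}\ge0$ in the upper-left block and the $\delta\ge0$ coupling $\rho_i$ to $P_i$ — are nonnegative. Hence $M$ generates a positive semigroup, $e^{Mt}\ge0$ entrywise for $t\ge0$, and by the variation-of-constants formula the inequality $\dot W\le MW$ propagates to $W(t)\le e^{Mt}W(0)$ componentwise. Since $S=0,I=1,R=2$, the event $\{\sum_i X_i(t)>0\}$ is exactly the event that some node is in state $I$ or $R$, so a union bound yields
\[
\bP\Big(\sum_{i=1}^N X_i(t)>0\Big)\le\sum_{i=1}^N\big(\bP(X_i(t)=I)+\bP(X_i(t)=R)\big)=\mathbf 1_{2N}^T W(t)\le \mathbf 1_{2N}^T e^{Mt}W(0),
\]
where $\mathbf 1_{2N}$ is the all-one vector in $\bR^{2N}$.

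Next I would diagonalize $M$ to extract the exponential rate and the constant. As $A$ is symmetric, $\beta A-\delta\mathbb I_N$ has an orthonormal eigenbasis $v_1,\dots,v_N$ with real eigenvalues $\beta\lambda_k(A)-\delta$. For each $k$ the vector $\big(v_k,\tfrac{\delta}{\beta\lambda_k(A)-\delta+\gamma}\,v_k\big)$ is an eigenvector of $M$ for the eigenvalue $\beta\lambda_k(A)-\delta$, while $(0,e_j)$ are eigenvectors for $-\gamma$; the hypothesis $-\gamma\notin\mathrm{spec}(\beta A-\delta\mathbb I_N)$ guarantees the denominators never vanish and that these $2N$ vectors are independent, so $M=SDS^{-1}$ is diagonalizable. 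The spectral abscissa of $M$ is then $\mu^*=\max\{\beta\lambda_1(A)-\delta,-\gamma\}$, using that $\lambda_1(A)$ is the largest eigenvalue of the symmetric nonnegative $A$. Writing $e^{Mt}=Se^{Dt}S^{-1}$ gives $\|e^{Mt}\|_2\le\kappa_2(S)\,e^{\mu^* t}$ with $\kappa_2(S)=\|S\|_2\|S^{-1}\|_2$. Applying Cauchy–Schwarz to the previous display, together with $\|\mathbf 1_{2N}\|_2=\sqrt{2N}$ and $\|W(0)\|_2^2=\sum_i\uno_{\{X_i(0)=I\}}+\sum_i\uno_{\{X_i(0)=R\}}=\sum_i\uno_{\{X_i(0)=I\lor R\}}$, produces the claimed bound with $C=\sqrt2\,\kappa_2(S)$, a constant depending only on $A$ and on $\beta,\delta,\gamma$ through the eigendata of $M$.

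The main obstacle is the non-normality of $M$: unlike the symmetric matrix $\beta A-\delta\mathbb I_N$ of Proposition \ref{finalSet}, here $\|e^{Mt}\|_2$ is not simply $e^{\mu^* t}$, so a multiplicative constant is unavoidable. The role of the hypothesis $-\gamma\notin\mathrm{spec}(\beta A-\delta\mathbb I_N)$ is precisely to exclude a defective eigenvalue of $M$: were $-\gamma$ to coincide with some $\beta\lambda_k(A)-\delta$, the two diagonal blocks could fail to diagonalize and $e^{Mt}$ would acquire polynomial factors $t^m e^{\mu^* t}$, destroying the clean exponential estimate. Quantitatively controlling $\kappa_2(S)$ — equivalently, measuring how far $M$ stands from being defective — is the delicate point, and it is exactly this condition number that the constant $C$ encodes.
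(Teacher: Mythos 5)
Your proposal is correct and follows essentially the same route as the paper: the same stacked $2N$-dimensional differential inequality with the block matrix $\begin{pmatrix}\beta A-\delta\mathbb I_N & 0\\ \delta\mathbb I_N & -\gamma\mathbb I_N\end{pmatrix}$, the same union bound and Cauchy--Schwarz step, and the same diagonalization under the hypothesis $-\gamma\notin\mathrm{spec}(\beta A-\delta\mathbb I_N)$, yielding the rate $\max\{\beta\lambda_1(A)-\delta,-\gamma\}$ and a constant given by the condition number of the eigenvector matrix. In fact you supply two details the paper leaves implicit -- the Metzler/positivity argument justifying that the componentwise inequality propagates through the semigroup, and the explicit eigenvectors $\bigl(v_k,\tfrac{\delta}{\beta\lambda_k(A)-\delta+\gamma}v_k\bigr)$ and $(0,e_j)$ showing diagonalizability -- so the proposal is, if anything, more complete.
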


\begin{proof}

By considering equations  \eqref{eq_rec} and \eqref{diseq_inf} we can write

\begin{equation*}
\frac{d \ov P(t)}{dt} \leq  \ov{A} \; \overline{P}(t),
\end{equation*}

where $\ov P(t)=\left[ P_I(t), P_R(t)\right]^T$, with

\begin{equation*}
P_I(t)=\left[ \bP(X_1(t)=I),  \ldots, \bP(X_N(t)=I)  \right]^T, \qquad P_R(t)=\left[ \bP(X_1(t)=R), \ldots, \bP(X_N(t)=R)  \right]^T, 
\end{equation*}

and 
\[
\ov A=
\left[
\begin{array}{c c}
\beta A - \delta \; \mathbb I_N &  0\\

\delta \; \mathbb I_N & - \gamma \; \mathbb I_N
\end{array}
\right].
\]

Thus,

$$\ov P(t) \leq \exp(t \ov A)\ov P(0),$$

with $\ov P(0)=[P_I(0), P_R(0)]^T$, which is determined by the initial condition $X_0$. Consequently
\begin{equation*}
\bP \left(\sum_{i=1}^N X_i(t) >0 \right) 
\leq u \exp(t \ov A  )\ov P(0).
\end{equation*}

By invoking Cauchy-Schwarz inequality we arrive at

$$\bP \left(\sum_{i=1}^N X_i(t) > 0 \right) \leq  ||u||_2 ||\exp(t \ov A) ||_2 ||\ov P(0)||_2.$$

\sO{The matrix $\ov A$ is diagonalizable 
if $- \gamma$ does not belong to the spectrum of $\beta A - \delta \; \mathbb I_N$. Indeed, it easy to see that under this hypothesis a basis of eigenvectors of $\ov A$ can be found}. Thus, we have that $||\exp(t \ov A) ||_2= ||M \exp(D t) M^{-1}||_2$, where
$D$ is the diagonal matrix containing the eigenvalues of $\ov A$ and $M$ the matrix
containing the corresponding eigenvectors. Finally, we have 

\begin{equation*}
\bP \left(\sum_{i=1}^N X_i(t) > 0 \right) \leq  C \;\sqrt{N \sum_{i=1}^N  \uno_{\{X_i(0) = I \lor R}\} } \; \exp (\lambda_1(D) t)  
\end{equation*}

where $$\lambda_1(D)=\max\{\beta \lambda_1(A)-\delta, -\gamma\}$$ is the maximum eigenvalue of the matrix $\ov A$, and $C= ||M||_2||M^{-1}||_2$.
\end{proof}


\emph{Numerical investigations.} We investigate numerically the role of the immunity-loss parameter $\gamma$ on the mean fraction of infected nodes, in both the exact SIRS and SIRS$_{v}$ models.
We consider the averaged $ 10^3$ sample paths resulting from a discrete event simulation of
the stochastic processes. The discrete event simulation is based on the generation of
independent Poisson processes for the infection of healthy nodes, the recovery
of infected, and for the loss of immunity of the removed, and for the vaccination of susceptible in the SIRS$_v$, i.e., when $\sigma >0$.

We can see that $\gamma$ influences the dynamics of the average fraction of infected nodes (the prevalence). Specifically, in Fig.~\ref{fig:SIM_SIRS_1} a) we show the behavior of the prevalence for the exact SIRS as function of time and $\gamma$. 
 We consider the complete graph with $N=50$ and fixed values of $\beta$ and $\delta$ for which the condition \eqref{threshold1} does not hold.
 \sO{We observe that for some low values of  $\gamma$ the average fraction of infected nodes
decays towards zero in a quite short time window. As $\gamma$
 grows the time to extinction tends to increase, and after a certain critical value of $\gamma$ the prevalence tends to stabilize around a positive quantity for long time 
 (resembling the behavior of the mean-field model that above the threshold reaches the positive equilibrium point (Sec. \ref{Secmeanf})).  The same beahvior can be observed
 for the exact SIRS$_v$ in b). Thus, we are led to assert that \emph{the value of $\gamma$ influences the time to extinction of the exact models}.}

 \begin{figure}[t]
 \centering
    \begin{minipage}{0.45\textwidth}
    \includegraphics[width=\textwidth,height=5.3cm]{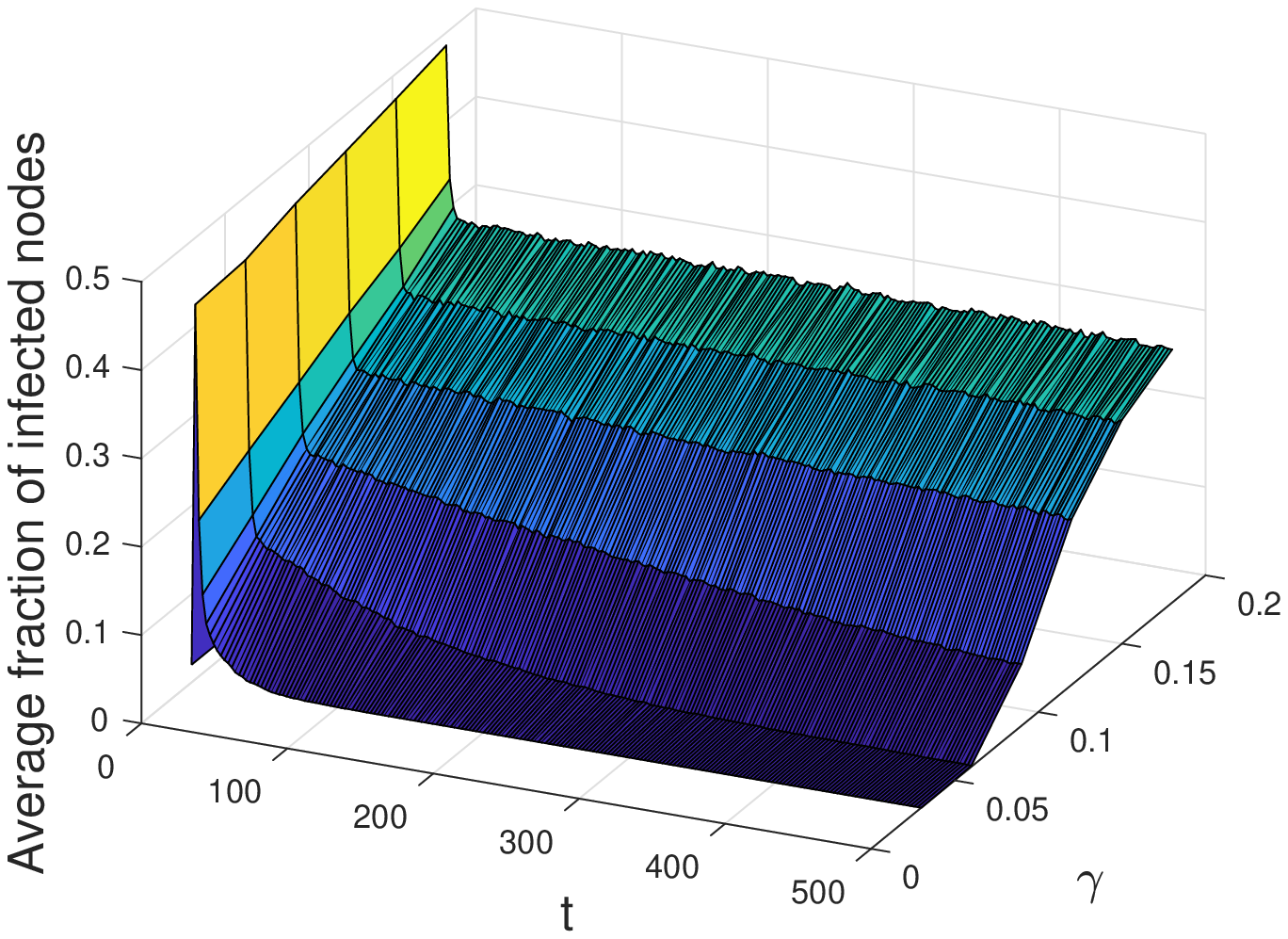}\put(-168,140){a)}
    \end{minipage}
   \hskip4mm
   \begin{minipage}{0.45\textwidth}
    \includegraphics[width=\textwidth,height=5.4cm]{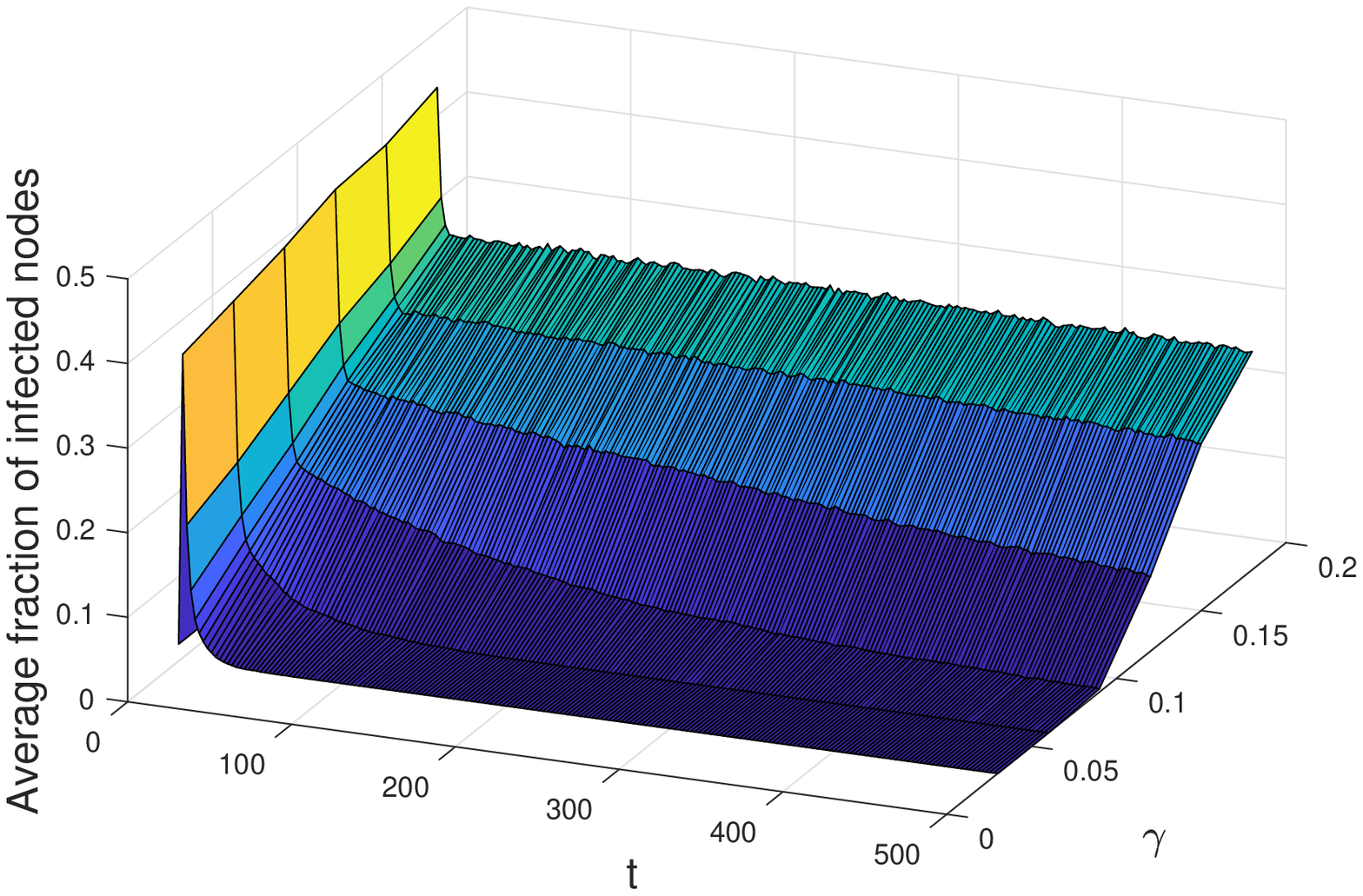}\put(-162,140){b)}
\end{minipage}
  \caption{ \ste{SIRS and SIRS$_{v}$ mean fraction of infected nodes as function of time and $\gamma$, obtained by averaging  $10^3$ simulated sample paths}, for a complete graph with $N=50$. a) SIRS model, $\beta=0.25$, $\delta=0.4$. b) SIRS$_v$ model, $\beta=0.25$, $\delta=0.4$, $\sigma=0.45$. At time 0 there is one infected node.}
\label{fig:SIM_SIRS_1}
  \end{figure}

\section{Mean-field approximation}\label{Secmeanf}

Let us come back to the node-level description for the Markov model (Sec.~\ref{nodelev}). As we pointed out equations \eqref{exactI} and \eqref{exactS} are not closed since they contain the joint probabilities $\bP(X_i(t)=S, X_j(t)=I)$.
We can "close" the equations providing an approximation for
the joint probabilities in terms of the marginal probabilities, assuming the independence between the dynamic states of two neighbors, the so-called first-order mean-field type approximation \cite{ PVM_GEMF}.  
Thus, let $(i,j) \in E$, we assume

\begin{equation}\label{meanf}
\bP(X_i(t)=S, X_j(t)=I)=\bP(X_i(t)=S)\bP(X_j(t)=I).
\end{equation}
Let us define the state probabilities of individual $i$, at time $t$, as 

\begin{equation*}
S_i(t)=\bP(X_i(t)=S), \qquad  I_i(t)=\bP(X_i(t)=I), \qquad R_i(t)=\bP(X_i(t)=R).
\end{equation*}
 Then, by means of the assumption \eqref{meanf}, we have the following \emph{mean-field} equations for the SIRS$_v$ model 

\begin{align}\label{3dim}
\frac{dS_i(t)}{d t} &= -S_i(t)\sum_{j=1}^N \beta a_{ij} I_j(t)+ \gamma R_i(t)- \sigma S_i(t) \nonumber\\
\frac{d I_i(t)}{dt} &= S_i(t)\sum_{j=1}^N \beta a_{ij} I_j(t) -\delta I_i(t)\\
\frac{d R_i(t)}{dt} &= \delta I_i(t) - \gamma R_i(t) + \sigma S_i(t), \nonumber
\end{align}
for $i=1, \ldots, N$, with initial conditions

$$((S_1(0), \ldots, S_N(0), I_1(0), \ldots, I_N(0), R_1(0), \ldots, R_N(0)) \in \tilde \Gamma,$$

$$\tilde \Gamma =\{(S_1, \ldots, S_N, I_1, \ldots, I_N,  R_1, \ldots, R_N ) \in \sO{\mathbb R^{3N}_+}| S_i + I_i +R_i=1, i=1,2, \ldots, N\}$$
where \sO{$\mathbb R^{3N}_+$ is the non-negative orthant of $\mathbb{R}^{3N}$}.
Since $S_i(t) + I_i(t) + R_i(t)=1$, we can omit the equation for the probability of being in the susceptible state and obtain  
\begin{align}\label{2dim}
\frac{d I_i(t)}{dt} &= (1-I_i(t)-R_i(t))\sum_{j=1}^N \beta a_{ij} I_j(t) -\delta I_i(t),   \nonumber \\
\frac{dR_i(t)}{d t} &=(\delta-\sigma) I_i(t) -(\gamma + \sigma) R_i(t) +\sigma,  
\end{align}

for  $i=1, \ldots, N$, with initial conditions

$$( I_1(0), \ldots, I_N(0),R_1(0), \ldots, R_N(0)) \in \Gamma,$$

where

$$\Gamma =\{( I_1, \ldots, I_N, R_1, \ldots, R_N) \in  \sO{\mathbb{R}^{2N}_+}| I_i + R_i \leq 1, i=1,2, \ldots, N\}.$$

The region $\tilde \Gamma$ and $\Gamma$ are positively invariant for the system \eqref{3dim} and \eqref{2dim}, respectively (see \cite{yang2017heterogeneous}).\\\\

\sO{As discussed for the SIS and SIR model in literature, we conjecture that also for the SIRS model the following inequality holds}

\sO{\begin{equation}\label{correl}
\bP(X_i(t)=S, X_j(t)=I) \leq \bP(X_i(t)=S)\bP(X_j(t)=I),
\end{equation} }
\sO{ that is}
\begin{equation*}\label{pcond}
\bP(X_i(t)=S|X_j(t)=I) \leq  \bP(X_i(t)=S),
\end{equation*}
for all $t \geq 0$.
\sO{ The intuitive idea behind this is that an infected neighbor does not increase the probability
of an individual to remain susceptible \cite{SIRscoglio}. 
A first rigorous proof of the positive correlation between infection states was provided in \cite{donnelly1993correlation} for the SIS Markov model and for a general
(non-Markov) SIR model, and later again proved for the Markovian SIS in \cite{cator2014nodal} (see also the discussion in \cite{cator2018reply})}.
 
\sO{If \eqref{correl} holds for our SIRS model, we would have that the derivative of the infection probability in \eqref{3dim} is always overestimated
as a consequence of the independence assumed in \eqref{meanf}. 
 Thus,
 the probability of infection for each node in the approximated model would provide an upper bound of the exact infection probability in the Markov model. This seems also to be confirmed by the simulations reported in Section \ref{numInv}, where we compare the exact model with the approximated one. 
Hence, from
a practical point of view, to prevent epidemics in a network,
the mean-field model would put us always on the safe side, as provided for other types of epidemic models \cite{VanMieghem2009,SIRscoglio, cator2018reply}.}

\section{Stability analysis}\label{Stab}

The \emph{disease free equilibrium} (DFE) of the system \eqref{2dim} is given by the vector $P_0=(I^0_1, \ldots I^0_N, R^0_1, \ldots, R^0_N)$, where

\begin{equation*}
I^0_i=0,  \qquad \text{and} \qquad R^0_i=\frac{\sigma}{\gamma+\sigma}, \qquad  i= 1, \ldots, N. 
\end{equation*}
Let us note that for the SIRS model without vaccination, i.e. $\sigma=0$, $R^0_i=0$ and $I^0_i=0$, for $i= 1, \ldots, N$. \\
The positive constant solution, i.e., the \emph{endemic equilibrium} $P^*$, for the SIRS$_v$  model \eqref{2dim} has the following components

\begin{align*}
I_i^*&= \frac{1}{\delta}\frac{\gamma \sum_{j=1}^N \beta a_{ij}I^*_j}{\sum_{j=1}^N \beta a_{ij}I^*_j(1+ \gamma/\delta)+(\gamma + \sigma)},\\
R_i^*&= 1- I_i^*- \frac{\gamma}{\sum_{j=1}^N \beta a_{ij}I^*_j(1+ \gamma/\delta)+(\gamma + \sigma)},
\end{align*}
for $i=1, \ldots, N$. Summing $I_i^*$  over all nodes, and divided by $N$, we obtain the average fraction of infected nodes in the steady state, $\bar{I}^*$.

\ste{Below, we recall some stability results from \cite{yang2017heterogeneous}, where the authors consider a heterogeneous version of \eqref{2dim}, adapting them to the homogeneous case.}

\begin{theorem}\label{eq0}

Let us consider the system \eqref{2dim} and let $D=\frac{\gamma}{\gamma + \sigma}\beta A - \delta \mathbb I_N $, whose maximum eigenvalue is 

\begin{equation*}\label{maxD}
\lambda_{1}(D)=\frac{\gamma}{\gamma + \sigma}\beta \lambda_{max}(A) - \delta.
\end{equation*}
The following statements hold
\begin{itemize}
\item[a)]If $\tau \leq \frac{\gamma + \sigma}{\gamma}\frac{1}{\lambda_1(A)}$ the disease free equilibrium $P_0$ is globally asymptotically stable.  
$P_0$ is the unique equilibrium of the system \eqref{2dim} on the boundary of $\Gamma$.
\item[b)]If $\tau > \frac{\gamma + \sigma}{\gamma}\frac{1}{\lambda_1(A)}$, $P_0$ is a saddle point, the system \eqref{2dim} is uniformly persistent and it has a unique positive constant solution $P^*$ in  \sO{$\mathring{\Gamma} $}. Moreover if $\delta \geq \sigma $, $P^*$ is asymptotically stable.
\end{itemize}
\end{theorem}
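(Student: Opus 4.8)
The plan is to organise everything around the threshold $\lambda_1(D)\le 0$ versus $\lambda_1(D)>0$, observing that $\tau\le\frac{\gamma+\sigma}{\gamma}\frac{1}{\lambda_1(A)}$ is exactly equivalent to $\lambda_1(D)\le 0$. First I would linearise \eqref{2dim} at $P_0$. Writing the right-hand sides as $F_i$ (for $I_i$) and $G_i$ (for $R_i$), using $a_{ii}=0$, and noting $S_i=1-I_i-R_i=\frac{\gamma}{\gamma+\sigma}$ at $P_0$, the Jacobian is block lower triangular,
\[
J(P_0)=\begin{pmatrix} D & 0\\ (\delta-\sigma)\mathbb{I}_N & -(\gamma+\sigma)\mathbb{I}_N\end{pmatrix},
\]
since $\partial F_i/\partial I_k|_{P_0}=\frac{\gamma}{\gamma+\sigma}\beta a_{ik}-\delta\delta_{ik}$ (the $(1,1)$ block is exactly $D$) and $\partial F_i/\partial R_k|_{P_0}=0$. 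Hence the spectrum of $J(P_0)$ is $\mathrm{spec}(D)\cup\{-(\gamma+\sigma)\}$, and because $-(\gamma+\sigma)<0$, $P_0$ is locally asymptotically stable when $\lambda_1(D)<0$ and a saddle when $\lambda_1(D)>0$; this gives the local part of (a) and the saddle claim in (b). I would also record here that $P_0$ is the unique boundary equilibrium: connectedness of $G$ forces $I\equiv 0$ as soon as some $I_k=0$ at equilibrium (a zero at $k$ propagates to all neighbours through $S_k\sum_j\beta a_{kj}I_j=\delta I_k$), while the faces $S_k=0$ and $R_k=0$ are incompatible with equilibrium (the former forces $I_k=0,\ R_k=1$ and then $\dot R_k=-\gamma\neq0$; the latter would require $I_k=\frac{\sigma}{\sigma-\delta}>1$).

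For the global attractivity of $P_0$ in (a) — the genuinely delicate point, since the naive bound $\dot I\le(\beta A-\delta\mathbb{I}_N)I$ only yields the coarser threshold $\tau<1/\lambda_1(A)$ — the idea is to use the $R$-dynamics to recover the factor $\frac{\gamma}{\gamma+\sigma}$. From $\dot S_i=-S_i\sum_j\beta a_{ij}I_j+\gamma-(\gamma+\sigma)S_i-\gamma I_i\le \gamma-(\gamma+\sigma)S_i$ a scalar comparison gives $\limsup_{t\to\infty}S_i(t)\le\frac{\gamma}{\gamma+\sigma}$ for every $i$. Feeding this into the $I$-equation yields, for large $t$, $\dot I\le(D+\epsilon\beta A)I$ for any $\epsilon>0$; when $\lambda_1(D)<0$, small $\epsilon$ keeps the Metzler comparison matrix Hurwitz, so $I(t)\to 0$ and then $R_i\to\frac{\sigma}{\gamma+\sigma}$, which together with the local stability above gives GAS in the strict subcritical case. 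For the boundary case $\lambda_1(D)=0$ I would instead take the Perron left eigenvector $w>0$ of $A$ and the Lyapunov function $V=w^T I$: the same bound on $S_i$ gives $\dot V\le\lambda_1(D)\,V=0$ asymptotically, and a LaSalle argument on $\{\dot V=0\}$ (where connectedness again forces $I\equiv0$) yields global attractivity, hence GAS.

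In case (b) I would invoke standard dissipative-persistence theory. Since $P_0$ is the only boundary equilibrium, is a saddle, and its stable manifold lies in the invariant disease-free boundary, the semiflow on the compact forward-invariant set $\Gamma$ is acyclic on the boundary, so a uniform persistence theorem gives $\liminf_t I_i(t)\ge\eta>0$; uniform persistence plus dissipativity then produces an interior equilibrium $P^*$ by the usual fixed-point argument. Uniqueness of $P^*$ I would read off the explicit relation for $I_i^*$ stated before the theorem, which exhibits $I^*$ as a monotone, strictly sublinear (concave) self-map, so that at most one positive fixed point exists. For the asymptotic stability of $P^*$ when $\delta\ge\sigma$, linearising gives the block form $\begin{pmatrix}B&C\\(\delta-\sigma)\mathbb{I}_N&-(\gamma+\sigma)\mathbb{I}_N\end{pmatrix}$ with $B_{ii}=-\delta-\sum_j\beta a_{ij}I_j^*<0$, off-diagonal $B_{ik}=S_i^*\beta a_{ik}\ge0$, and $C$ diagonal and nonpositive; the hypothesis $\delta\ge\sigma$ makes the lower-left block nonnegative, which is precisely what lets an M-matrix / diagonal-scaling (or Lyapunov-equation) argument certify that $-J(P^*)$ is Hurwitz.

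The two obstacles I anticipate are the following. The first is the critical threshold $\lambda_1(D)=0$ in (a), where linearisation is inconclusive (a zero eigenvalue) and one must combine the asymptotic $S$-bound with the Perron-eigenvector Lyapunov function and LaSalle to recover stability. The second, and harder, is the local stability of the endemic equilibrium $P^*$: showing that the full $2N\times 2N$ Jacobian is Hurwitz genuinely requires $\delta\ge\sigma$ to control the sign of the $I$-to-$R$ coupling, and here I would follow the corresponding computation in \cite{yang2017heterogeneous}, specialising their heterogeneous rates to the homogeneous ones.
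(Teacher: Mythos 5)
Your proposal cannot be measured against an in-paper argument, because the paper does not prove Theorem~\ref{eq0} at all: the theorem is explicitly \emph{recalled} from \cite{yang2017heterogeneous}, where the heterogeneous version of \eqref{2dim} is analysed, and is merely restated in the homogeneous setting. So your blind reconstruction is, by necessity, a different route, and most of it is sound: the block lower-triangular Jacobian at $P_0$ with blocks $D$ and $-(\gamma+\sigma)\mathbb I_N$, the uniqueness of the boundary equilibrium via connectedness, the comparison $\dot I \le (D+\varepsilon\beta A)I$ after the asymptotic bound $\limsup_{t\to\infty} S_i(t)\le \gamma/(\gamma+\sigma)$, the acyclicity/persistence scheme, and uniqueness of $P^*$ via a monotone, strictly sublinear fixed-point map (this last step also needs irreducibility of $A$, i.e.\ connectedness of $G$, which you should state; likewise the claim that the stable manifold of $P_0$ avoids $\mathring\Gamma$ when $\lambda_1(D)>0$ is asserted, not proved).

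Two genuine gaps remain. The first is the critical case $\tau=\frac{\gamma+\sigma}{\gamma}\frac{1}{\lambda_1(A)}$, i.e.\ $\lambda_1(D)=0$. ``Global attractivity, hence GAS'' is a non sequitur: attractivity does not imply Lyapunov stability, and at criticality the linearisation has a zero eigenvalue, so stability must come from somewhere else; note also that $V=w^TI$ is only positive semidefinite and satisfies $\dot V\le \lambda_1(D)V$ only on the set where $S_i\le \gamma/(\gamma+\sigma)$ for all $i$. That set is forward invariant, but $P_0$ lies on its \emph{boundary}, so it contains no neighbourhood of $P_0$, and ``$\dot V\le 0$ asymptotically'' is not a hypothesis LaSalle accepts. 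The case is repairable, but it needs extra work: for stability, bound the excess $m_i(t)=(S_i(t)-\gamma/(\gamma+\sigma))^{+}$ by $m_i(0)e^{-(\gamma+\sigma)t}$ and apply Gronwall to get $V(t)\le V(0)\exp\bigl(\beta\lambda_1(A)\max_i m_i(0)/(\gamma+\sigma)\bigr)$; for attractivity from arbitrary initial data, use that every omega-limit set is compact, invariant, chain transitive and contained in the invariant region, so $V$ is constant on it, and then connectedness forces $I\equiv 0$ and the $R$-dynamics force $R\equiv \sigma/(\gamma+\sigma)$ there.

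The second gap is the rationale for local stability of $P^*$ when $\delta\ge\sigma$. The M-matrix argument you invoke does not apply: the upper-right block of $J(P^*)$ is $C=-\mathrm{diag}\bigl(\sum_j \beta a_{ij}I_j^*\bigr)$, which is strictly negative at the endemic equilibrium, and its entries are off-diagonal entries of the full $2N\times 2N$ matrix; hence $-J(P^*)$ has positive off-diagonal entries and is not a Z-matrix, and no sign-flip similarity fixes both off-diagonal blocks simultaneously when $\delta>\sigma$. Where $\delta\ge\sigma$ actually enters is after eliminating the $R$-components: for an eigenpair with $\mathrm{Re}\,\lambda\ge 0$ one gets $v_i=(\delta-\sigma)u_i/(\lambda+\gamma+\sigma)$, and $\delta\ge\sigma$ guarantees $\mathrm{Re}\bigl[(\delta-\sigma)/(\lambda+\gamma+\sigma)\bigr]\ge 0$, so the diagonal of the reduced eigenvalue equation dominates and a weighted-maximum (Perron-type) estimate using the equilibrium identity $S_i^*\sum_j\beta a_{ij}I_j^*=\delta I_i^*$ produces a contradiction. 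Since for this part you ultimately defer to the computation in \cite{yang2017heterogeneous} --- which is exactly what the paper itself does for the whole theorem --- this is a fixable misdescription rather than a fatal flaw, but the argument as you sketch it would not close on its own.
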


Thus, for the SIRS$_v$ model \eqref{2dim}, the critical threshold separating the region of extinction from the persistent one is 

\begin{equation}\label{thrSIRSV}
\tau_{c;SIRS_{v}}^{(1)}=\frac{\gamma + \sigma}{\gamma}\frac{1}{\lambda_1(A)}.
\end{equation}

In \cite{yang2017heterogeneous}, the authors give also sufficient conditions for the global stability of the endemic equilibrium in $\mathring{\Gamma}$. Precisely, in the homogeneous setting, we have:

\begin{theorem}\label{glob}
Let $\tau > \frac{\gamma + \sigma}{\gamma}\frac{1}{\lambda_1(A)}$. Then, $P^*$ is 
\sO{globally} asymptotically stable in $\mathring{\Gamma}$, if one of the following two conditions hold:

\begin{itemize}
\item[a)]  $\delta > \sigma$, \; and $\lambda_1(A) < \frac{1}{\beta} \cdot \displaystyle \min_i \left\{ \frac{\delta I^*_i}{(S_i^*)^2} \right\} \cdot   \displaystyle \min_i \left\{ \frac{S_i^*}{1-S_i^*}\right\},$
\item[b)]$ \delta= \sigma$.
\end{itemize}

\end{theorem}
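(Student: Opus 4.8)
The plan is to prove global attractivity by a Lyapunov/LaSalle argument, treating the two cases separately, since at $\delta=\sigma$ the immunity dynamics decouple from the infection. I would work in the reduced $(I,R)$-form \eqref{2dim} and first record the equilibrium identities $S_i^*\sum_{j}\beta a_{ij}I_j^*=\delta I_i^*$ and $(\delta-\sigma)I_i^*-(\gamma+\sigma)R_i^*+\sigma=0$, writing $\Phi_i^*=\sum_{j}\beta a_{ij}I_j^*$ so that $\delta I_i^*/(S_i^*)^2=\Phi_i^*/S_i^*$, which already explains one of the minima in (a). Since we are above threshold, Theorem \ref{eq0}(b) gives uniform persistence, so trajectories starting in $\mathring{\Gamma}$ remain in a compact set bounded away from the faces $\{I_i=0\}$; this keeps the logarithmic terms below well defined and lets me apply LaSalle on a compact positively invariant absorbing set.

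I would dispatch part (b) first, as it is clean. When $\delta=\sigma$ the second equation of \eqref{2dim} reduces to $\dot{R}_i=-(\gamma+\sigma)R_i+\sigma$, a decoupled linear system whose solution converges exponentially to $R^{0}=\sigma/(\gamma+\sigma)$ for each $i$. Substituting this limit into the first equation produces an asymptotically autonomous system whose limit equation is
\[
\dot{I}_i=\left(\frac{\gamma}{\gamma+\sigma}-I_i\right)\sum_{j=1}^{N}\beta a_{ij}I_j-\delta I_i ,
\]
which, after rescaling $I_i\mapsto I_i/(1-R^{0})$, is exactly the NIMFA SIS equation with effective infection rate $\frac{\gamma}{\gamma+\sigma}\beta$. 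Its positive equilibrium is globally asymptotically stable precisely when $\frac{\gamma}{\gamma+\sigma}\beta\lambda_1(A)>\delta$, i.e.\ under our threshold hypothesis, by the classical Lajmanovich--Yorke type result for NIMFA. I would then invoke the theory of asymptotically autonomous systems to transfer GAS of the limit system back to the genuinely nonautonomous dynamics, settling (b).

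For part (a) I would build a mixed Lyapunov function combining a Volterra term in the infection variables with a quadratic term in $R$,
\[
V=\sum_{i=1}^{N}\left[c_i\left(I_i-I_i^*-I_i^*\ln\frac{I_i}{I_i^*}\right)+\frac{d_i}{2}(R_i-R_i^*)^2\right],
\]
with weights $c_i,d_i>0$ to be fixed. Differentiating, the quadratic part contributes $d_i[(\delta-\sigma)(I_i-I_i^*)(R_i-R_i^*)-(\gamma+\sigma)(R_i-R_i^*)^2]$, while the Volterra part yields the usual factor $(1-I_i^*/I_i)(S_i\Phi_i-\delta I_i)$; expanding $S_i\Phi_i$ about the equilibrium and using $\delta I_i=(I_i/I_i^*)S_i^*\Phi_i^*$ isolates a diagonal per-node negative contribution of the form $-(\Phi_i^*/S_i^*)(S_i-S_i^*)^2$ together with an off-diagonal network coupling $\sum_{i,j}\beta a_{ij}(\cdots)$.

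The main obstacle is establishing $\dot{V}\le 0$, and this is exactly where the two minima and the spectral bound enter. Using $A=A^{T}$ I would symmetrize the coupling into a quadratic form whose magnitude is controlled by $A$, hence by $\lambda_1(A)$; uniformly bounding the node-dependent coefficients from below by $\min_i\{\delta I_i^*/(S_i^*)^2\}$ and $\min_i\{S_i^*/(1-S_i^*)\}$ then lets me dominate this form by the diagonal negative terms, and condition (a) is precisely the inequality making the resulting global quadratic form negative semidefinite. With $\dot{V}\le 0$ and equality only at $P^*$, LaSalle's invariance principle on the compact persistent set gives global attractivity, which together with the local asymptotic stability of $P^*$ from Theorem \ref{eq0}(b) yields GAS in $\mathring{\Gamma}$. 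The delicate points I expect to fight with are choosing $c_i,d_i$ so that the cross term $(\delta-\sigma)(I_i-I_i^*)(R_i-R_i^*)$ can be absorbed after completing the square against $-(\gamma+\sigma)(R_i-R_i^*)^2$ (which is where the sign hypothesis $\delta>\sigma$ is genuinely used), and ensuring the uniform lower bounds are not so lossy as to render the condition vacuous.
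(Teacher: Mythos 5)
First, a remark on the comparison you were asked for: the paper does not prove Theorem~\ref{glob} at all — it is quoted, specialized to homogeneous rates, from \cite{yang2017heterogeneous} — so your attempt has to stand on its own rather than be matched against an in-paper argument. Your case b) does stand: when $\delta=\sigma$ the $R$-equations in \eqref{2dim} decouple and converge exponentially to $\sigma/(\gamma+\sigma)$, the limit equation is (after your rescaling) the NIMFA SIS system with infection rate $\beta\gamma/(\gamma+\sigma)$, whose positive equilibrium is globally asymptotically stable above the threshold by the Lajmanovich--Yorke argument (the graph is connected, hence $A$ irreducible), and the theory of asymptotically autonomous systems, combined with the uniform persistence of Thm.~\ref{eq0}~b) to exclude the disease-free equilibrium of the limit system from the $\omega$-limit set, transfers this back to the original dynamics. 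Modulo citing those results precisely, that argument closes.

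Case a), however, has a genuine gap at exactly the step you call ``the main obstacle''. Write $\tilde x_i=x_i-x_i^*$ and $\Phi_i=\sum_j\beta a_{ij}I_j$. With $U_i=I_i-I_i^*-I_i^*\ln(I_i/I_i^*)$ and using $\delta=S_i^*\Phi_i^*/I_i^*$ and $\tilde S_i=-\tilde I_i-\tilde R_i$, the derivative along \eqref{2dim} is
\[
\dot U_i=\frac{\tilde I_i}{I_i}\Bigl(S_i\Phi_i-\delta I_i\Bigr)
=-\frac{\Phi_i^*}{I_i}\,\tilde I_i^2-\frac{\Phi_i^*}{I_i}\,\tilde I_i\tilde R_i+\frac{S_i}{I_i}\,\tilde I_i\tilde \Phi_i-\frac{S_i^*\Phi_i^*}{I_iI_i^*}\,\tilde I_i^2 .
\]
Note what this expression is and is not: there is no term $-(\Phi_i^*/S_i^*)(S_i-S_i^*)^2$ (such a term would come from a Volterra function in $S_i$, not in $I_i$), and every coefficient carries a state-dependent factor $1/I_i$ or $S_i/I_i$. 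Consequently the cross term in $\tilde I_i\tilde R_i$ cannot be absorbed by completing the square, with \emph{constant} weights $c_i,d_i$, against the constant-coefficient term $-(\gamma+\sigma)d_i\tilde R_i^2$; and the network coupling $\frac{S_i}{I_i}\tilde I_i\tilde\Phi_i$ cannot be dominated via $\lambda_1(A)$ using the equilibrium constants of condition a): any uniform bound replaces $1/I_i$ and $S_i/I_i$ by suprema along the trajectory, i.e.\ by unquantified persistence bounds, not by $\delta I_i^*/(S_i^*)^2$ and $S_i^*/(1-S_i^*)$. So your assertion that ``condition a) is precisely the inequality making the resulting quadratic form negative semidefinite'' is not derivable from this ansatz. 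It is telling that the paper's own Thm.~\ref{attractivity} uses exactly your function $V=cU+Z$ \emph{successfully} — but only for the regular-graph reduced system \eqref{IR}, where $\Phi=\beta d_G I$ makes the $1/I$ factor cancel identically, yielding $\dot U=-\beta d_G(\tilde I^2+\tilde I\tilde R)$ with constant coefficients; that cancellation requires $\tilde\Phi_i\propto\tilde I_i$ and fails node-wise on a general graph, which is precisely why the paper describes GAS for this class of models as essentially open. To recover condition a) you would need the (different) mechanism of \cite{yang2017heterogeneous}, not this Lyapunov construction.
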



\sO{
Let us note that the condition in a), regarding the maximum eigenvalue of $A$, {might be difficult to satisfy: we have not been able to find graphs and parameters for which this condition is valid, and even in \cite{yang2017heterogeneous} the authors do not provide numerical examples in which the condition holds.}
 For example, in Sec. \ref{subsec_reg}, we shall prove that, in the homogeneous setting, for the case of regular graphs condition a) is never satisfied, hence it cannot be used for verifying the global attractivity of the endemic equilibrium. Thus, in the next section, we shall investigate the attractivity of the endemic equilibrium in this specific case.
 Specifically, for dynamics over a regular graph, we find an invariant subset of $\tilde{\Gamma}$ (and, consequently of $\Gamma$), and we prove that, above the threshold and under the condition $\delta > \sigma$,
this subset is included in the domain of attraction of the endemic equilibrium.
To prove this we pass through the theory of equitable partitions and we shall see how in this particular case the equilibrium points can be computed by a reduced system.}

\subsection{\sO{Attractivity of the the endemic equilibrium: regular graphs}}\label{attr}

In this section we dwell on the graph-theoretical
notion of equitable partition \cite{Schwenk,godsil,EvolDelio}.
A network with an equitable partition of its node set posses certain structural regularity
of the graph connectivity. Based on this, we shall analyse the domain of attraction of the endemic equilibrium in the case of regular graphs that can be seen as a graph with an equitable partition. 

\subsubsection{Equitable partitions}
In the following, we report the definition of equitable partition \cite{Schwenk}.
\begin{definition}\label{def:eqpart}
Let $G=(V,E)$ be an undirected graph. The partition $\pi=\left\{V_1,...,V_n\right\}$ of the node set $V$ is called \emph{equitable} if  
 for all $i,j \in \left\{1, \dots ,n \right\}$, there is an integer $d_{ij}$ such that 
\begin{equation*}
d_{ij}=\mbox{\rm deg}(v,V_j):=\# \left\{e \in E : e=\left\{v,w\right\}, w \in V_j \right\}.
\end{equation*} 
independently of $v \in V_i$.
\end{definition}

An equitable partition generates the \emph{quotient graph} $G/\pi$, which is a
\emph{multigraph} with the cells $V_1,...,V_n$ as nodes and $d_{ij}$ edges between $V_i$ and $V_j$. 
For simplicity, one can identify $G/\pi$ in a (simple) 
graph having the same node set, and where an edge exists between
$V_i$ and $V_j$ if at least one exists in the original multigraph \cite{QEP}.


This partition of the node set can be adopted for representing a population divided in communities, a framework that captures some of the most salient structural inhomogeneities in contact patterns in many
applied contexts \cite{pellis2015seven, ottaviano2018optimal}. For an overview of the use of equitable partitions, from a theoretical and practical point of view, see e.g., \cite{QEP,ottaviano2018optimal,ottaviano2019community,neuberger2020}. 
 One can identify the set of all nodes in $V_i$ as the $i$-th \emph{community} of the whole population.
In particular, each $V_i$ induces a subgraph, $G_i$, of $G$ that is necessarily regular.
 Hereafter, as in \cite{QEP}, we consider two infection rates: the \emph{intra-community} infection rate $\beta$ for infecting individuals in the same community and the \emph{inter-community} infection rate $\eps\beta$  i.e., the rate at which individuals
among different communities get infected.  We assume  $0<\eps < 1$, the customary physical interpretation is that infection across communities occur at a much smaller rate. Clearly the model can be extended to the case $\eps > 1$.

\sO{In the case of two different infection rates,  we replace the unweighted adjacency matrix in the system \eqref{3dim} with its weighted version, incorporating the parameter $\eps$ (see \cite[Example 3.1]{QEP}). 
Interestingly, the spectral radius of the smaller quotient graph (that is of the \emph{quotient matrix} related to the quotient graph (see \cite{QEP})) is equal to the spectral radius of the matrix $A$ (see \cite[Prop 3.3]{QEP}).}

 \sO{In \cite{QEP}, the authors show that it is possible to reduce the number of equations
representing the time-change of infection probabilities 
 when all nodes belonging to the same cell have the same initial conditions.
After proving the existence of a positively invariant set for the original system of $N$ 
differential equations, they show that the endemic equilibrium 
belongs to this invariant set and that it can be computed by the reduced system
of $n < N$ equations. In the following, we want to prove the same for the case of the SIRS model (with vaccination).}

Let us consider the average value of the state probabilities at time $t$ of nodes in $G_h$, 
\begin{equation*}
\ov{S}_h(t)= \frac{1}{k_h}\sum_{i \in G_h} S_i(t),  \qquad  \ov{I}_h(t)= \frac{1}{k_h}\sum_{i \in G_h} I_i(t), \qquad \ov{R}_h(t)= \frac{1}{k_h}\sum_{i \in G_h} R_i(t),  
\end{equation*}
where $k_h$ is the cardinality of $G_h$,  $h=1, \ldots, n$. Then, it holds

\begin{theorem}\label{thmRed}
Let $G=(V,E)$ be an undirected graph and $\pi = \{V_h,\ h = 1, \dots, n\}$ be an equitable partition of the node set $V$. Let $G_h$ be the subgraph of $G=(V,E)$ induced by the cell $V_h$. Let $Y=(S_1, \ldots, S_N, I_1, \ldots, I_N, R_1, \ldots, R_N) \in \tilde \Gamma$. Then, the subset of $\tilde \Gamma$


\begin{align}\label{inv_set}
\tilde \Omega =  \{ Y \in \tilde{\Gamma}| & S_i = \ov S_h,  I_i=\ov I_h, R_i=\ov R_h,\ \forall\, i \in G_h,\ h=1, \dots, n \} 
\end{align}
is positively invariant for the system \eqref{3dim}.
\end{theorem}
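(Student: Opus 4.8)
The plan is to reduce positive invariance of $\tilde\Omega$ to uniqueness of solutions of \eqref{3dim}. First I would note that the vector field in \eqref{3dim} is polynomial, hence locally Lipschitz, so through each initial datum there passes exactly one maximal solution. The idea is then to build, starting from a point of $\tilde\Omega$, a trajectory that visibly remains in $\tilde\Omega$, and to conclude by uniqueness that it coincides with the genuine solution.

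The crux is the following consequence of Definition \ref{def:eqpart}: if the infection probabilities are constant on cells, i.e. $I_j = \ov I_l$ for every $j \in V_l$ and every $l$, then for any $i \in V_h$ the infection pressure is
\begin{equation*}
\sum_{j=1}^N a_{ij} I_j = \sum_{l=1}^n \ov I_l \sum_{j \in V_l} a_{ij} = \sum_{l=1}^n d_{hl}\,\ov I_l ,
\end{equation*}
and the right-hand side depends only on the cell index $h$, not on the particular node $i \in V_h$, precisely because $d_{hl} = \mathrm{deg}(i, V_l)$ is independent of $i \in V_h$. Hence, on the constant-per-cell ansatz, the three right-hand sides of \eqref{3dim} take a common value across all nodes of a given cell.

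Consequently I would introduce the reduced $3n$-dimensional system for the cell averages $(\ov S_h, \ov I_h, \ov R_h)_{h=1,\dots,n}$,
\begin{align*}
\frac{d\ov S_h}{dt} &= -\ov S_h \sum_{l=1}^n \beta d_{hl} \ov I_l + \gamma \ov R_h - \sigma \ov S_h, \\
\frac{d\ov I_h}{dt} &= \ov S_h \sum_{l=1}^n \beta d_{hl} \ov I_l - \delta \ov I_h, \\
\frac{d\ov R_h}{dt} &= \delta \ov I_h - \gamma \ov R_h + \sigma \ov S_h,
\end{align*}
and take its unique solution with initial data $\ov S_h(0), \ov I_h(0), \ov R_h(0)$ inherited from the given $Y(0) \in \tilde\Omega$ (where these averages equal the common cell value). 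Lifting this solution to the full node set by setting $S_i(t) = \ov S_h(t)$, $I_i(t) = \ov I_h(t)$, $R_i(t) = \ov R_h(t)$ for every $i \in V_h$, the computation of the previous paragraph shows termwise that the lifted trajectory satisfies \eqref{3dim}. By uniqueness it is the solution of \eqref{3dim} issuing from $Y(0)$, and since it is constant on each cell at every instant it lies in $\tilde\Omega$ for all $t \ge 0$.

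I expect no genuine obstacle beyond bookkeeping: the only substantive point is recognizing that the equitable structure makes the vector field tangent to $\tilde\Omega$, after which uniqueness of the initial value problem upgrades tangency to honest invariance. One should also record that $\tilde\Omega \subseteq \tilde\Gamma$, so that the positive invariance of $\tilde\Gamma$ for \eqref{3dim} (already noted after \eqref{2dim}) guarantees the lifted trajectory remains an admissible probability configuration; alternatively, one can verify directly that the reduced system preserves $\ov S_h + \ov I_h + \ov R_h = 1$ and nonnegativity.
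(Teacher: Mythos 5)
Your proof is correct, and its logical skeleton is the same as the paper's: both arguments reduce positive invariance to uniqueness of solutions for the locally Lipschitz vector field of \eqref{3dim}, and both hinge on the same key property of equitable partitions, namely that the cell degrees $d_{hl}$ do not depend on the representative $i \in V_h$. The execution differs in a way worth noting, however. The paper works with the deviation variables $S_i - \ov S_h$, $I_i - \ov I_h$, $R_i - \ov R_h$, derives their evolution equations \eqref{ridS1}--\eqref{ridR1}, and carries out a fairly long algebraic decomposition of the nonlinear term (see \eqref{ridS2}, where the identity $\sum_{z \in G_m}(a_{iz} - a_{rz}) = 0$ kills the pure-average contribution) in order to show that the identically zero function solves the deviation system; it then appeals to uniqueness of the cell-constant solution of \eqref{3dim} to conclude that the deviations vanish for all time. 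You instead verify tangency directly: under the cell-constant ansatz the infection pressure collapses to $\sum_{l} d_{hl}\,\ov I_l$, so the lift of the reduced $3n$-dimensional flow solves \eqref{3dim}, and uniqueness finishes the argument. Your route avoids the paper's bookkeeping entirely and produces the reduced system \eqref{RedSIRS} as a byproduct of the proof, whereas the paper obtains it only afterwards as a consequence; the paper's deviation form has the marginal advantage of exhibiting the equations one would study to ask whether $\tilde\Omega$ is attracting rather than merely invariant. One small point of reconciliation: the paper allows distinct intra- and inter-community infection rates $\beta$ and $\eps\beta$ through a weighted adjacency matrix, so in your computation $d_{hl}$ should be read as the weighted degree $\sum_{j \in V_l} a_{ij}$, which is still independent of $i \in V_h$; nothing else in your argument changes.
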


\begin{proof}

 From \eqref{3dim}, we have for all $i \in G_h,  \quad h=1, \ldots, n$

\begin{align}\label{ridS1}
\frac{d (S_i-\ov S_h)}{dt}  
& =- \beta \left[ S_i \sum_{z=1}^N  a_{iz} I_z  - \frac{1}{k_h}  \sum_{r \in G_h}\sum_{z=1}^N S_r   a_{rz}I_z    \right] + \gamma (R_i-\ov R_h)\\  \nonumber
& - \sigma (S_i- \ov S_h)  \nonumber
\end{align}
\begin{align}\label{ridI1}
\frac{d (I_i-\ov I_h)}{dt} &= \beta \left[ S_i \sum_{z=1}^N  a_{iz} I_z  - \frac{1}{k_h}  \sum_{r \in G_h}\sum_{z=1}^N S_r   a_{rz}I_z    \right] -  \delta (I_i - \ov I_h), 
\end{align}
\begin{align}\label{ridR1}
\frac{d (R_i-\ov R_h)}{dt} &=   \delta (I_i - \ov I_h) - \gamma (R_i-\ov R_h) + \sigma (S_i - \ov S_h),   \hspace{0.2cm} 
\end{align}

Now, let us focus on the nonlinear part in \eqref{ridS1} (and in \eqref{ridI1}). We have

\begin{align}\label{ridS2}
&- \beta \left[ S_i \sum_{z=1}^N  a_{iz} I_z - \frac{1}{k_h} \sum_{r \in G_h}\sum_{z=1}^N S_r   a_{rz}I_z   \right]   
 = - \beta \left[ \sum_{m=1}^n \sum_{z \in G_m} a_{i z} S_i I_z  -  \frac{1}{k_h}   \sum_{r \in G_h}  \sum_{m=1}^n \sum_{z \in G_m} a_{rz}  S_r I_z  \right]\\  \nonumber
&= -\beta  \frac{1}{k_h} \sum_{r \in G_h}  \left[  \sum_{m=1}^n \sum_{z \in G_m}a_{i z} S_i I_z -  \sum_{m=1}^n \sum_{z \in G_m} a_{rz}  S_r  I_z    \right]  
= -\beta  \frac{1}{k_h} \sum_{r \in G_h}  \sum_{m=1}^n \sum_{z \in G_m} (a_{i z} S_i - a_{rz} S_r) I_z  \\  \nonumber
&= -\beta  \frac{1}{k_h} \left[ \sum_{r \in G_h}  \sum_{m=1}^n \sum_{z \in G_m}  (a_{iz}(S_i -\ov S_h)- a_{rz}(S_r -\ov S_h) ) I_z+  (a_{iz}-a_{rz}) \ov S_h I_z \right] \\ \nonumber 
&=-\beta  \frac{1}{k_h} \left[ \sum_{r \in G_h}  \sum_{m=1}^n \sum_{z \in G_m} (a_{iz}(S_i -\ov S_h)- a_{rz}(S_r -\ov S_h))(I_z - \ov I_m) \right. \\ \nonumber
& +  \sum_{r \in G_h}  \sum_{m=1}^n \sum_{z \in G_m} (a_{iz}(S_i -\ov S_h)- a_{rz}(S_r -\ov S_h)) \ov I_m    \\ \nonumber
& + \left. \sum_{r \in G_h}  \sum_{m=1}^n \sum_{z \in G_m} (a_{iz}-a_{rz})\ov S_h (I_z- \ov I_m) + \sum_{r \in G_h}  \sum_{m=1}^n \sum_{z \in G_m} (a_{iz}-a_{rz}) \ov S_h \ov I_m \right] \nonumber
\end{align}
 
Now, from the last equation in \eqref{ridS2}

\begin{equation}\label{zero}
 \frac{1}{k_h}  \sum_{r \in G_h}  \sum_{m=1}^n \sum_{z \in G_m} (a_{iz} -a_{rz}) \ov S_h \ov I_m=  \frac{1}{k_h} \ov S_h \sum_{r \in C_h}  \sum_{m=1}^n \ov I_m \sum_{z \in G_m} (a_{iz} -a_{rz}),
\end{equation}

Then, since $\forall \; i,r \in G_h$ and $\forall \; m\in \{  1, \ldots, n \}$, $\sum_{z \in G_m} (a_{iz} -a_{rz})=0$, we have that \eqref{zero} is equal to zero. 
Finally, from \eqref{ridS1} and \eqref{ridS2}, we come to have

\begin{align}\label{meanS}
\frac{d (S_i-\ov S_h)}{dt} &=  -\beta  \frac{1}{k_h} \left[ \sum_{r \in G_h}  \sum_{m=1}^n \sum_{z \in G_m} (a_{iz}(S_i -\ov S_h)- a_{rz}(S_r -\ov S_h))(I_z - \ov I_m) \right. \\ \nonumber
& + \left.  \sum_{r \in G_h}  \sum_{m=1}^n \sum_{z \in G_m} (a_{iz}(S_i -\ov S_h)- a_{rz}(S_r -\ov S_h)) \ov I_m + \right. \\ \nonumber
& \left. \sum_{r \in G_h}  \sum_{m=1}^n \sum_{z \in G_m} (a_{iz}-a_{rz})\ov S_h (I_z- \ov I_m)  \right] - \sigma (S_i - \ov S_h) + \gamma (R_i -\ov R_h),  
\end{align} 

$  \forall i \in G_h$,  $ h=1, \ldots, n$.
Similarly,

\begin{align}\label{meanI}
\frac{d (I_i-\ov I_h)}{dt} &=  \beta  \frac{1}{k_h} \left[ \sum_{r \in G_h}  \sum_{m=1}^n \sum_{z \in G_m} (a_{iz}(S_i -\ov S_h)- a_{rz}(S_r -\ov S_h))(I_z - \ov I_m) \right. \\ \nonumber
& + \left.  \sum_{r \in G_h}  \sum_{m=1}^n \sum_{z \in G_m} (a_{iz}(S_i -\ov S_h)- a_{rz}(S_r -\ov S_h)) \ov I_m + \right. \\ \nonumber
& \left. \sum_{r \in G_h}  \sum_{m=1}^n \sum_{z \in G_m} (a_{iz}-a_{rz})\ov S_h (I_z- \ov I_m)  \right] - \delta (I_i - \ov I_h), 
\end{align}

$  \forall i \in G_h$,  $ h=1, \ldots, n$.
%

Now, let us denote by $g(t)$ the solution of the system $\mathcal{G}$, with equations \eqref{ridS1}, \eqref{ridI1}, \eqref{ridR1}, where $g: \mathbb{R} \rightarrow \mathbb{R}^{3N}$ and consider the case where
\begin{equation}\label{SameIniz}
S_i(0)- \ov{S}_h(0)=0,   \qquad
I_i(0)- \ov{I}_h(0)=0, \qquad
R_i(0)-\ov  R_h(0)=0   \qquad \forall i \in G_h,
\end{equation}
that means, $S_i(0)=S_r(0)$,  $I_i(0)=I_r(0)$, $R_i(0)=R_r(0)$, for all $i, r \in G_h$, $ h=1, \ldots, n$.
Then, from \eqref{meanS}, \eqref{meanI}, \eqref{ridR1} we can easily see that the identically zero function $g \equiv 0$  is the unique solution of the system $\mathcal{G}$, with initial conditions \eqref{SameIniz}.
Indeed, $g \equiv 0$, means that for all $t \geq 0$,
\begin{equation}\label{sametrajec} 
S_i(t)=S_r(t), \qquad I_i(t)=I_r(t),  \qquad R_i(t)=R_r(t),  \qquad \forall i, r \in G_h,
\end{equation}
$h=1, \ldots, n.$ Moreover, the vector \sO{with components as in \eqref{sametrajec} is a solution of \eqref{3dim} and it is unique in $\tilde{\Gamma}$, with respect to the initial
conditions \eqref{SameIniz}, hence $g \equiv 0$ is
the unique solution of  $\mathcal{G}$. Thus, we have that $\tilde \Omega$ is positively invariant for system \eqref{3dim}}.
\end{proof}

Thus, under the hypothesis in Thm. \ref{thmRed}, considering initial conditions in $\tilde \Omega$, we can reduce the original system \eqref{3dim} of $3N$ differential equations and
describe the time-change of the state probabilities
by a system of $3n$ equations. The same argument can be applied to system \eqref{2dim}. 
Specifically, we have

\begin{align}\label{RedSIRS}
\frac{d \ov S_h}{dt} &= -\beta \ov S_h \sum_{m=1 ; m \neq h}^n \eps d_{h m} \ov I_m - \beta \ov S_h d_{h} \ov I_h + \gamma \ov R_h - \sigma \ov S_h,\\ \nonumber
\frac{d \ov I_h}{dt} &= \beta \ov S_h \sum_{m=1 ; m \neq h}^n \eps d_{h m} \ov I_m + \beta \ov S_h  d_{h } \ov I_h -\delta \ov I_h, \\  \nonumber
\frac{d \ov R_h}{dt} &= \delta \ov I_h- \gamma \ov R_h + \sigma \ov S_h, \hspace{3  cm} h=1, \ldots, n
\end{align}
where $d_h$ is the internal degree of $G_h$.

\begin{remark}\label{remSteady}
\sO{From the uniqueness argument in Thm. \ref{eq0} b), it is immediate to deduce that when $G$ is a graph with an equitable partition the endemic equilibrium of the system \eqref{3dim} must belong to $\tilde{\Omega}\cap \mathring {\tilde{\Gamma}}$. Thus, it can be computed by means of the reduced system \eqref{RedSIRS}}.
\end{remark}

\subsubsection{Regular Graphs}\label{subsec_reg}

\ste{In this section we investigate the domain of attraction of the endemic equilibrium for the case of regular graphs. Indeed, we can see that the sufficient condition in Thm. \ref{glob} ensuring the global attractiveness of the endemic equilibrium, above the threshold \eqref{thrSIRSV}, }

\begin{equation}\label{cond_maxD}
\lambda_1(A) < \frac{1}{\beta} \cdot  \displaystyle \min_i \left\{ \frac{\delta I^*_i}{(S_i^*)^2} \right\} \cdot  \displaystyle  \min_i \left\{ \frac{S_i^*}{1-S_i^*}\right\},
\end{equation}
\ste{is never satisfied in the case of regular graphs. 
We can prove this fact by means of the results obtained above for the equitable partitions, since
 regular graphs, where all nodes have the same degree $d_G$, can be considered as having an equitable partition with a single cell.  
Then, from Remark \ref{remSteady}, we have that $S^*_i=S^*_j=S^*$ and $I^*_i=I^*_j=I^*$ for all $i,j=1, \ldots, N$, and we can use the reduced system \eqref{RedSIRS} for computing the steady state vector. From the equilibrium equation $( \beta S^* d_G - \delta)  I^*=0$, when $I^* \neq 0$, we have $S^*= \frac{\delta}{\beta d_G}$. Since $I^* < 1- S^*$, we obtain}

\sO{$$\frac{1}{\beta}\cdot \frac{\delta I^*}{S^*} \cdot \frac{1}{1-S^*} <  \frac{\delta}{\beta S^*}=d_G=\lambda_1(A).$$}
\ste{that contradicts \eqref{cond_maxD}.}\\\\

\sO{\begin{theorem}\label{attractivity}
Let $G=(V,E)$ be an undirected regular graph with degree $d_G$, and $\tau > \frac{\gamma + \sigma}{\gamma}\frac{1}{\lambda_1(A)}$. \\ Then, if $ \delta > \sigma$, the endemic equilibrium $ Y^*=(S^*_1, \ldots, S^*_N, I^*_1, \ldots, I_N^*, R_1^*, \ldots, R_N^*)$ is asymptotically stable in $\mathring{ \tilde{\Gamma}}$ and $\tilde{\Omega}\cap \mathring {\tilde{\Gamma}}$
is a subset of the domain of attraction of $Y^*$.
\end{theorem}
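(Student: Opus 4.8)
The strategy is to reduce, via the single-cell equitable partition of a regular graph, to a planar system and then run a Poincar\'e--Bendixson argument. Since $G$ is $d_G$-regular we have $\lambda_1(A)=d_G$, so the hypothesis $\tau>\frac{\gamma+\sigma}{\gamma}\frac{1}{\lambda_1(A)}$ puts us above the threshold \eqref{thrSIRSV}. The local asymptotic stability of $Y^*$ in $\mathring{\tilde\Gamma}$ is then immediate from Theorem \ref{eq0} b), because $\delta>\sigma$ implies $\delta\ge\sigma$. Hence the only thing left to prove is that every trajectory of \eqref{3dim} starting in $\tilde\Omega\cap\mathring{\tilde\Gamma}$ converges to $Y^*$.

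I would first exploit the invariance result. By Theorem \ref{thmRed}, $\tilde\Omega$ is positively invariant, so a solution issued from $\tilde\Omega\cap\mathring{\tilde\Gamma}$ stays in $\tilde\Omega$ for all $t\ge0$; there its common coordinates $(\ov S,\ov I,\ov R)$ evolve according to the reduced system \eqref{RedSIRS}. For a regular graph the partition consists of a single cell, so the inter-community terms disappear and $d_h=d_G$; eliminating $\ov S=1-\ov I-\ov R$ leaves the planar system
\begin{equation}\label{plan:red}
\begin{aligned}
\frac{d\ov I}{dt}&=\beta d_G(1-\ov I-\ov R)\ov I-\delta\ov I,\\
\frac{d\ov R}{dt}&=(\delta-\sigma)\ov I-(\gamma+\sigma)\ov R+\sigma,
\end{aligned}
\end{equation}
on the compact, positively invariant triangle $T=\{(\ov I,\ov R):\ \ov I,\ov R\ge0,\ \ov I+\ov R\le1\}$. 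Convergence of the solution of \eqref{plan:red} to the interior endemic point $(\ov I^*,\ov R^*)$ is equivalent to convergence of the full trajectory to $Y^*$, so the problem is now two-dimensional.

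The heart of the proof is the phase-plane analysis of \eqref{plan:red}. I would rule out closed orbits by Dulac's criterion with the multiplier $B(\ov I,\ov R)=1/\ov I$: writing $f$ and $g$ for the two right-hand sides, a direct computation gives
\begin{equation*}
\frac{\partial(Bf)}{\partial\ov I}+\frac{\partial(Bg)}{\partial\ov R}=-\beta d_G-\frac{\gamma+\sigma}{\ov I}<0
\end{equation*}
throughout the interior $\{\ov I>0\}$, so no periodic orbit can lie there. Above the threshold the unique interior equilibrium is $P^*=(\ov I^*,\ov R^*)$, and Theorem \ref{eq0} b) also provides uniform persistence, whence $\liminf_{t\to\infty}\ov I(t)>0$: the $\omega$-limit set of any interior trajectory is bounded away from the edge $\{\ov I=0\}$ and therefore cannot contain the disease-free equilibrium. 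Since $P^*$ is locally asymptotically stable it admits no homoclinic loop, and with periodic orbits already excluded, Poincar\'e--Bendixson forces the $\omega$-limit set to be exactly $\{(\ov I^*,\ov R^*)\}$. Thus every interior solution of \eqref{plan:red} converges to the endemic equilibrium, and the asserted inclusion of $\tilde\Omega\cap\mathring{\tilde\Gamma}$ in the domain of attraction of $Y^*$ follows.

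The main obstacle is precisely the global behaviour of the planar reduced system: excluding sustained oscillations and preventing trajectories from drifting to the boundary. Both are handled economically here---the Dulac function $1/\ov I$ makes the divergence sign-definite, and the uniform persistence inherited from Theorem \ref{eq0} keeps the limit set off $\{\ov I=0\}$---so that no additional Lyapunov construction is required. Everything else (local stability of $Y^*$, invariance of $\tilde\Omega$, and the identification of $P^*$ as the unique interior steady state) is taken directly from the results established earlier in the paper.
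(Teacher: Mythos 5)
Your proof is correct, and it reaches the conclusion by a genuinely different route in the global step. The reduction is identical to the paper's: you treat the regular graph as a single-cell equitable partition, invoke Theorem \ref{thmRed} to stay in $\tilde\Omega$, and pass to the planar system in $(I,R)$ --- your reduced system coincides with the paper's \eqref{IR} --- so that local asymptotic stability of $Y^*$ and uniqueness of the interior equilibrium are imported from Theorem \ref{eq0} b) and Remark \ref{remSteady}, exactly as in the paper. From there the paths diverge: the paper constructs an explicit Lyapunov function $V=cU+Z$ with $U=I-I^*-I^*\ln(I/I^*)$, $Z=\frac{1}{2}(R-R^*)^2$ and $c=(\delta-\sigma)/(\beta d_G)>0$, computing $V'=-(\delta-\sigma)(I-I^*)^2-(\gamma+\sigma)(R-R^*)^2\le 0$ and concluding by Lyapunov's theorem; you instead rule out periodic orbits with the Dulac multiplier $1/I$ (your divergence computation $-\beta d_G-(\gamma+\sigma)/I<0$ is correct, and the region $\{I>0\}$ intersected with the triangle is convex, hence simply connected, while the invariant edge $\{I=0\}$ cannot be touched by a cycle), exclude the disease-free equilibrium from $\omega$-limit sets via the uniform persistence asserted in Theorem \ref{eq0} b) (legitimate, since trajectories of the reduced system issued from $\tilde\Omega\cap\mathring{\tilde\Gamma}$ are trajectories of \eqref{2dim} in $\mathring\Gamma$), exclude homoclinic loops by Lyapunov stability of $P^*$, and finish with Poincar\'e--Bendixson. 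Both arguments are sound, and both live entirely in the plane, so neither yields attraction beyond $\tilde\Omega\cap\mathring{\tilde\Gamma}$. The trade-off is this: the paper's Lyapunov construction is self-contained --- it needs neither the persistence statement nor any planar topology, and it makes visible exactly where $\delta>\sigma$ enters (positivity of the coefficient $c$) --- whereas your argument outsources the boundary behaviour to the cited persistence and relies on machinery that only exists in dimension two; in exchange, your Dulac divergence is negative irrespective of the sign of $\delta-\sigma$, so your proof uses $\delta>\sigma$ only through the citation of local asymptotic stability (valid for $\delta\ge\sigma$), and thus would also cover the borderline case $\delta=\sigma$, which the paper handles separately via Theorem \ref{glob} b).
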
}

\begin{proof}

\sO{The asymptotic stability is provided in Thm. \ref{eq0} b).
Thus, we have to prove that, under the condition $\delta > \sigma$, we can identify a subset of the domain of attraction of the endemic equilibrium in the case of regular graphs.}

We can apply Thm. \ref{thmRed} in the case of $G$ regular graph. 
From \eqref{RedSIRS}, and since $S+I+R=1$, we obtain


\begin{align}\label{IR}
\frac{d I}{dt} &= \beta \; d_G (1-I-R)  I - \delta  I, \\ \nonumber
\frac{d R}{dt} &= \delta  I- \gamma  R+ \sigma  (1-I-R), \nonumber
\end{align}
with initial conditions $(I(0),R(0)) \in \mathring \Gamma' $, with $\Gamma' = \{  (I,R) \in \mathbb{R}^2_+ | I+R \leq 1 \}.$ \sO{By Thm. \ref{thmRed}, $S_i(t)=S(t)$, $I_i(t)=I(t)$, $R_i(t)=R(t)$, $i=1, \ldots, N$, for all $t \geq 0$, since all nodes have the same trajectories when starting with the same initial conditions.} 

Now, let us consider the Volterra-type function, $U=I-I^*-I^* \ln (I/I^*)$ , 
used by many authors \cite{goh1978global, freedman1985global,beretta1986general,lin1993global}, and the common quadratic function $Z=\frac{1}{2}(R-R^*)^2$.
Since from the equilibrium equations $ \beta (1-I^*-R^*) d I^* - \delta  I^*=0$ and $\delta  I^*- \gamma  R^*+ \sigma  (1-I^*-R^*)=0$, after some manipulations, we obtain

$$U'=-\beta \;d_G(I-I^*)^2-\beta d_G(I-I^*)(R-R^*),$$

$$Z'=(\delta-\sigma)(R-R^*)(I-I^*)-(\gamma+\sigma)(R-R^*)^2.$$

Let us define $V=c U + Z$, where $c=\frac{ (\delta-\sigma)}{\beta d_G} > 0$. Then,
$$
V'=- (\delta-\sigma)(I-I^*)^2-(\gamma+\sigma)(R-R^*)^2,
$$
 so that $V'\le 0$ and $V'=0$ if and only if $I=I^*$ and $R=R^*$. \sO{Thus, $V$ is a Lyapunov function for the system \eqref{IR} and by a classical theorem of Lyapunov we have the global attractivity (and the local stability) of the endemic equilibrium $(I^*,R^*)$ in $\mathring \Gamma'$}.

\sO{Consequently, for a dynamics over a regular graph, when $Y(0) \in  \tilde{\Omega}\cap \mathring {\tilde{\Gamma}}$, the trajectories of the original system \eqref{3dim} coincide with the solution obtained from the reduced system \eqref{IR} (recalling that $S=1-I-R$).} 
 Hence, $ \lim_{t \to \infty}Y(t) = Y^* $,  where  $Y^* \in  \tilde{\Omega}\cap \mathring {\tilde{\Gamma}}$.
\end{proof}

\subsection{Notes on the basic SIRS epidemic model}\label{SIRS}

From Thm. \ref{eq0}, we can see that for a basic SIRS model, i.e., by setting $\sigma=0$, it holds

\begin{equation}\label{tauSIRS}
 \tau^{(1)}_{c;SIRS}=\frac{1}{\lambda_1(A)}.
\end{equation}

 \sO{Moreover, from Thm. \ref{eq0} b), above $ \tau^{(1)}_{c;SIRS}$ the asymptotic stability of the endemic equilibrium is always ensured, without further conditions.}
Let us note that

\begin{equation*}\label{eqtau}
\tau^{(1)}_{c;SIRS} =\tau^{(1)}_{c;SIS} =\tau^{(1)}_{c;SIR},
\end{equation*}
see, indeed, for the SIS and SIR threshold, e.g., \cite{VanMieghem2009,QEP,SIRscoglio}. 
Comparing \eqref{tauSIRS} with \eqref{thrSIRSV}, it is clear how the introduction of vaccination extends the region of extinction, that is values of $\delta$ and $\beta$ for which the epidemics would persist without vaccination can be instead sufficient to drop the epidemics if the vaccination is introduced in the population.
 \sO{The mean-field threshold for the SIRS model is not able to capture the role of $\gamma$ in the extinction and persistence of epidemics. 
 However, the value of $\gamma$ in the mean-field model explicitely influences the average fraction of infected nodes in the steady state, indeed
for the SIRS model the positive equilibrium point has the following components:}

\begin{equation*}\label{SIRSinf}
I_i^*= \frac{1}{1+\delta/\gamma}\left(1-\frac{1}{1+\tau (1+\delta/\gamma) \sum_{j=1}^N a_{ij}I^*_j}\right),
\end{equation*}

\begin{equation*}\label{SIRSrem_susc}
 R^*_i=\frac{\delta}{\gamma} I_i^*,  \qquad  S_i^*= 1- \left(\frac{\gamma + \delta}{\gamma}\right)I_i^*,
\end{equation*}
for $i=1, \ldots, N$. We can see that 
 for fixed values of $\beta$ and $\delta$, as $\gamma$ increases the steady state solution $I_i^*$ approaches that of the SIS model (see \cite{VanMieghem2009}). This is easy to understand since, as $\gamma$ increases, the average immune period tends to decrease (a removed individual quickly return to the susceptible state) and the behavior of the SIRS model approaches that of the SIS model.  Conversely, if the value of $\gamma$ goes down (the return to the
susceptible state is protracted) the probability of being infectious tends to decreases, detaching from the SIS steady-state solution \cite{Turri}.

\section{Numerical investigations}\label{numInv}

In Fig.~\ref{fig:SIRV_1}, we consider the average fraction of infected nodes 
 of the SIRS$_v$ model, as function of time and $\sigma$, for a complete graph with $N=50$, by fixing $\beta=0.25$, $\delta=0.4$ and $ \gamma=0.2$. We can see how increasing the value of the rate of vaccination $\sigma$, the average fraction of infected nodes decreases, thus passing from a region of persistence 
to a region of extinction. \sO{Thus, once known the topology of the contact network and the other parameters involved, we can calibrate the value of $\sigma$ to guide the epidemic towards the extinction}.


In Fig.~\ref{fig:Steady_1}, we report the steady-state average fraction of infected nodes, $\bar{I}^*$, as function of $\gamma$, for different values of $\sigma$, by considering a complete graph with $N=50$, $\beta=0.25$ and $\delta=0.9$. We can see that, by fixing the value of $\sigma$, the value of $\bar{I}^*$ increases as $\gamma$ increases, thus a shorter immunity period leads to a more aggressive epidemic. Vice versa, by fixing $\gamma$, the value of the prevalence in the steady-state clearly decreases as $\sigma$ increases. \sO{Thus, the less time each individual remains unvaccinated, 
the more the entire population will benefit in terms of percentage of infected individuals in the long-run}.

 \begin{figure}[h]
 \centering
 \includegraphics[width=9cm]{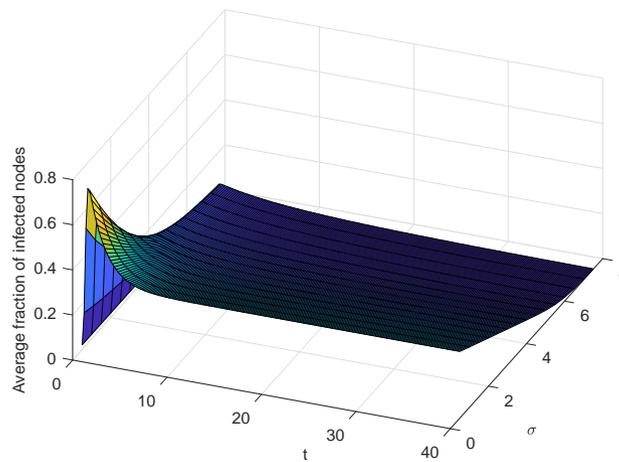} 
 \caption{ SIRS$_{v}$ average fraction of infected nodes as function of time and $\sigma$, for a complete graph with $N=50$, $\beta=0.25$, $\delta=0.4$, $ \gamma=0.2$. \sO{ At time 0 there is one infected node.}}
\label{fig:SIRV_1}
   \end{figure}

 \begin{figure}[h]
 \centering
 \includegraphics[width=9cm]{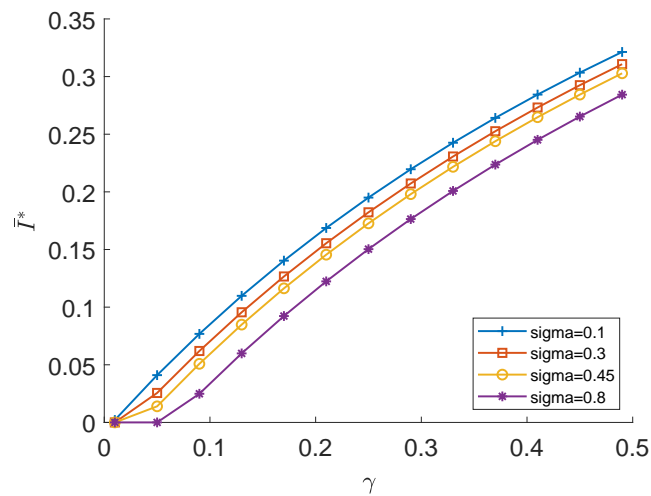}
 \caption{ SIRS$_{v}$ steady-state average fraction of infected nodes,  $\bar{I}^*$, as function of $\gamma$, for different values of $\sigma$, for a complete graph with $N=50$, $\beta=0.25$, $\delta=0.9$.}
\label{fig:Steady_1}
   \end{figure}

\sO{Fig. ~\ref{fig:EqPart} depicts the trajectories of the infection and recovery probabilities
 from system \eqref{3dim} for two nodes of a regular graph with $N=50$ and $d_G=10$, starting with different initial conditions, $I_i(0) \neq I_j(0)$, $R_i(0) \neq R_j(0)$,  for $i=1, \ldots N$ .
\ste{These solutions are compared with the one computed using the reduced system \eqref{RedSIRS},
considering all nodes having the same initial conditions equal to $ \ov{I}(0)=1/N \cdot \sum_{i=1}^N I_i$, and $\ov  R(0)=1/N \cdot \sum_{i=1}^N R_i$. }
We can see that trajectories starting outside the invariant set $\tilde \Omega$
  tend to approach the one starting in $\tilde \Omega$ as time goes on. It can be seen that, as pointed out in the Remark \ref{remSteady}, the positive equilibrium belongs to $\tilde{\Omega}\cap \mathring {\tilde{\Gamma}}$, and can be computed with the reduced system. Thus, from the numerical investigation, we can note that even when the initial conditions of the nodes are different, the trajectories are attracted by the endemic equilibrium}.

In Fig.~\ref{fig:Sim-SIRS_V1}, we consider a complete graph with $N=50$ and provide a comparison
between the dynamics of the prevalence, obtained from the solution of the ODE system \eqref{2dim}, and the averaging $2 \cdot 10^4$ sample paths resulting from the discrete event simulation of
the exact stochastic SIRS$_v$ process. 
 In Fig.~\ref{fig:Sim-SIRS_V1} a), we consider values of the parameters such that  $\tau <\tau^{(1)}_{c;SIRS_v}$ while in b) and c) values for which $\tau > \tau^{(1)}_{c;SIRS_v}$. We can see that in a) only in the early phase the approximated model is slightly above the exact averaged dynamics.
 \sO{In  b) for the chosen parameters values, i.e. $\beta=1$, $\delta=0.45$, $\gamma=0.2$, $\sigma=0.4$, there is a quite perfect match. Interestingly, in c) when we consider the same values for $\beta$, $\delta$ and $\sigma$, but $\gamma=0.06$ we have a different qualitative behavior between the exact and the approximated model after a certain point in time. Indeed, we can see that the exact prevalence, after reaching the peak, starts to decrease towards the state with no infected quite early, while in the approximate model, the infection remains persistent.}

In Fig.~\ref{fig:Sim-SIRS_V2} we report the same type of comparison done in Fig.~\ref{fig:Sim-SIRS_V1}, but for a regular graph with $N=50$ and $d_G=10$. We can see that, for the chosen parameters, the solution of the approximated model stays slightly
 above that of the exact model, thus providing an upper bound for the exact averaged dynamics. 
 However we can note that in a), as well as in b), for the time window considered, the qualitative behavior is the same between the two models. For the chosen values of the parameters in b), the stochastic dynamics seems to stand on a positive value for long time before reaching the absorption, resembling the behavior of the mean-field model that above the threshold reaches the positive equilibrium. 
However, in c), as for the complete graph case, when we have the same $\beta$, $\delta$ and $\sigma$, but a lower $\gamma$ than in b), the qualitative behavior between the dynamics of the two models is different and after the peak the exact dynamics reaches the extinction 
 quite early.
Thus, let us rewrite the condition for the extinction $\tau \leq \tau_{c;SIRS_{v}}^{(1)}$ in the following way

\begin{equation*}
\rho \leq \frac{1}{\lambda_1(A)}=\tau_c, \qquad \text{where} \qquad \rho=\frac{\beta \gamma}{\delta(\gamma + \sigma)}.
\end{equation*}

\sO{Then, we can assert that from Fig.~\ref{fig:Sim-SIRS_V1} and Fig.~\ref{fig:Sim-SIRS_V2}, in some $\rho$-region around $\tau_c$, we can observe deviations between the mean-field and the exact model. Thus we could expect that, in general, deviations between the two models are expected for intermediate value of $ \beta \gamma/(\delta(\gamma + \sigma))$. This behavior can also be observed in the SIS model in a $\tau$-region around $\tau_c$ \cite{VanMieghem2009}}.\\\\





 \begin{figure}[h]
 \centering
 \includegraphics[width=8cm]{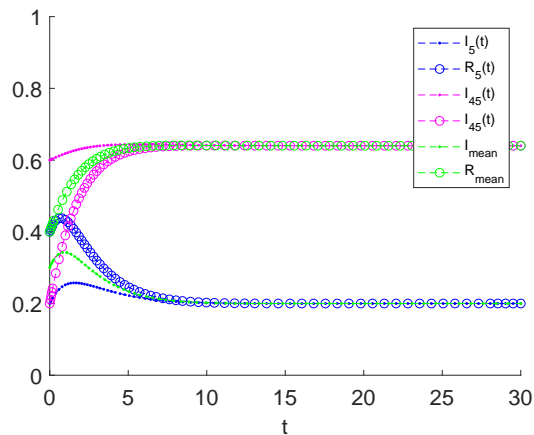}
 \caption{ Dynamics of infection and recovery probabilities of two selected nodes starting with different initial conditions, from system \eqref{3dim}, compared with those obtained from the reduced system \eqref{RedSIRS}, where all nodes have the same initial conditions. 
We consider a regular graph with $N=50$ and $d_G=10$ with $\beta=0.25$, $\delta=0.4$, $ \gamma=0.2$, $\sigma=0.3$,  $\tau > \tau_{c;SIRS_v}^{(1)}$. 
}
\label{fig:EqPart}
 \end{figure}


 \begin{figure}[h!]
 \centering
    \begin{minipage}{0.4\textwidth}
    \includegraphics[width=\textwidth,height=5cm]{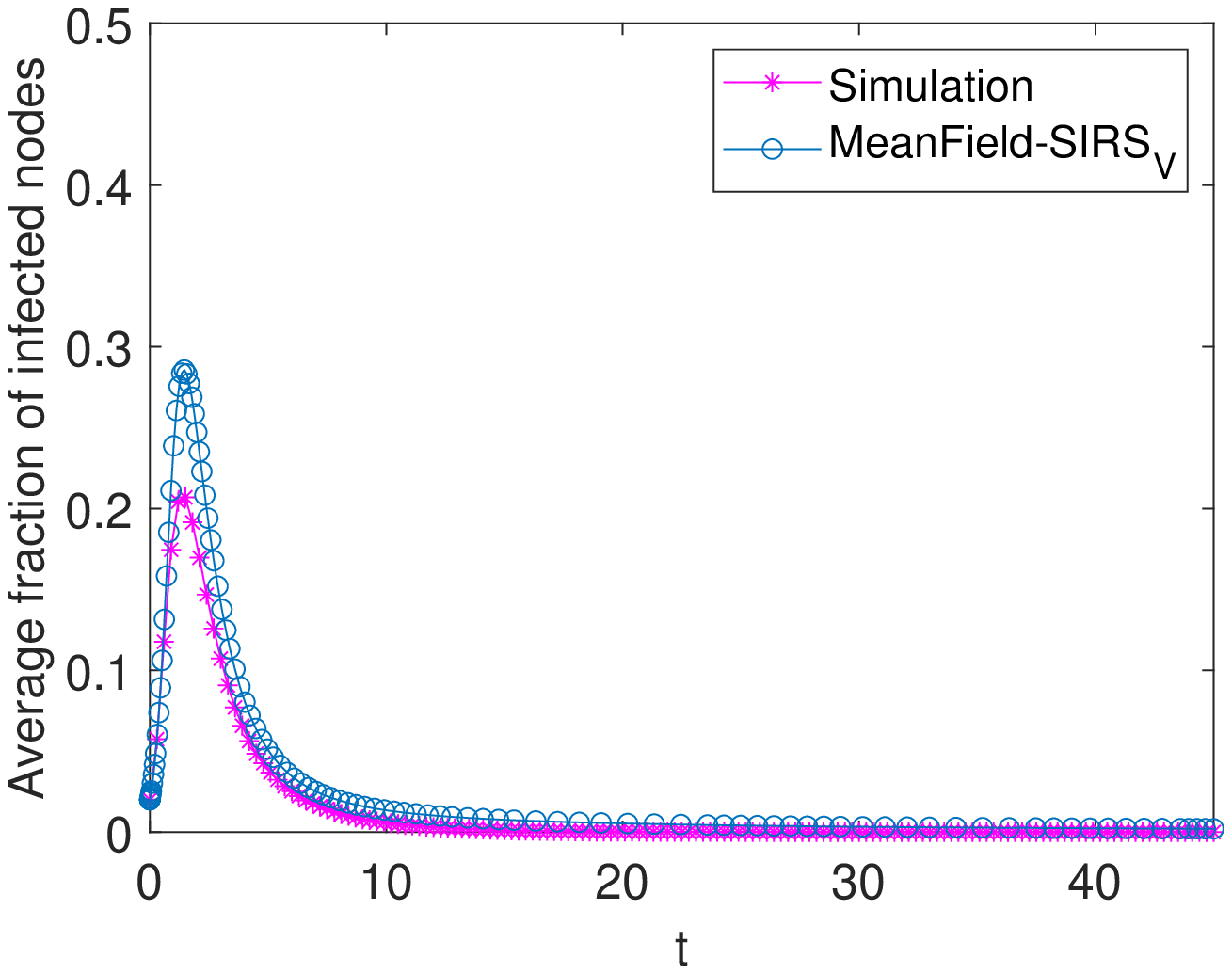}\put(-210,130){a)}
    \end{minipage}
   \hskip4mm
   \begin{minipage}{0.4\textwidth}
    \includegraphics[width=\textwidth,height=5cm]{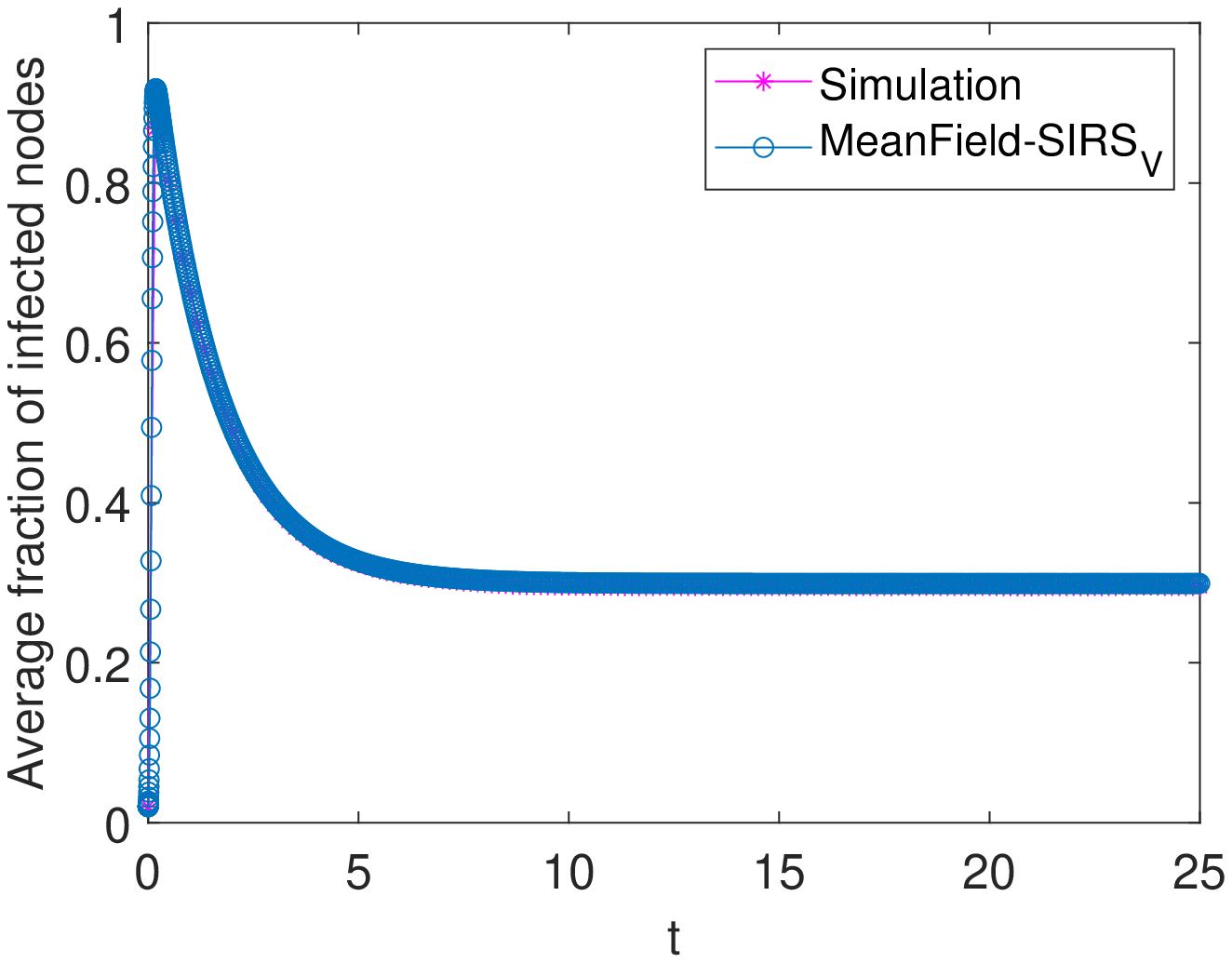}\put(-210,130){b)}
\end{minipage}
 \hskip4mm
   \begin{minipage}{0.4\textwidth}
    \includegraphics[width=\textwidth,height=5cm]{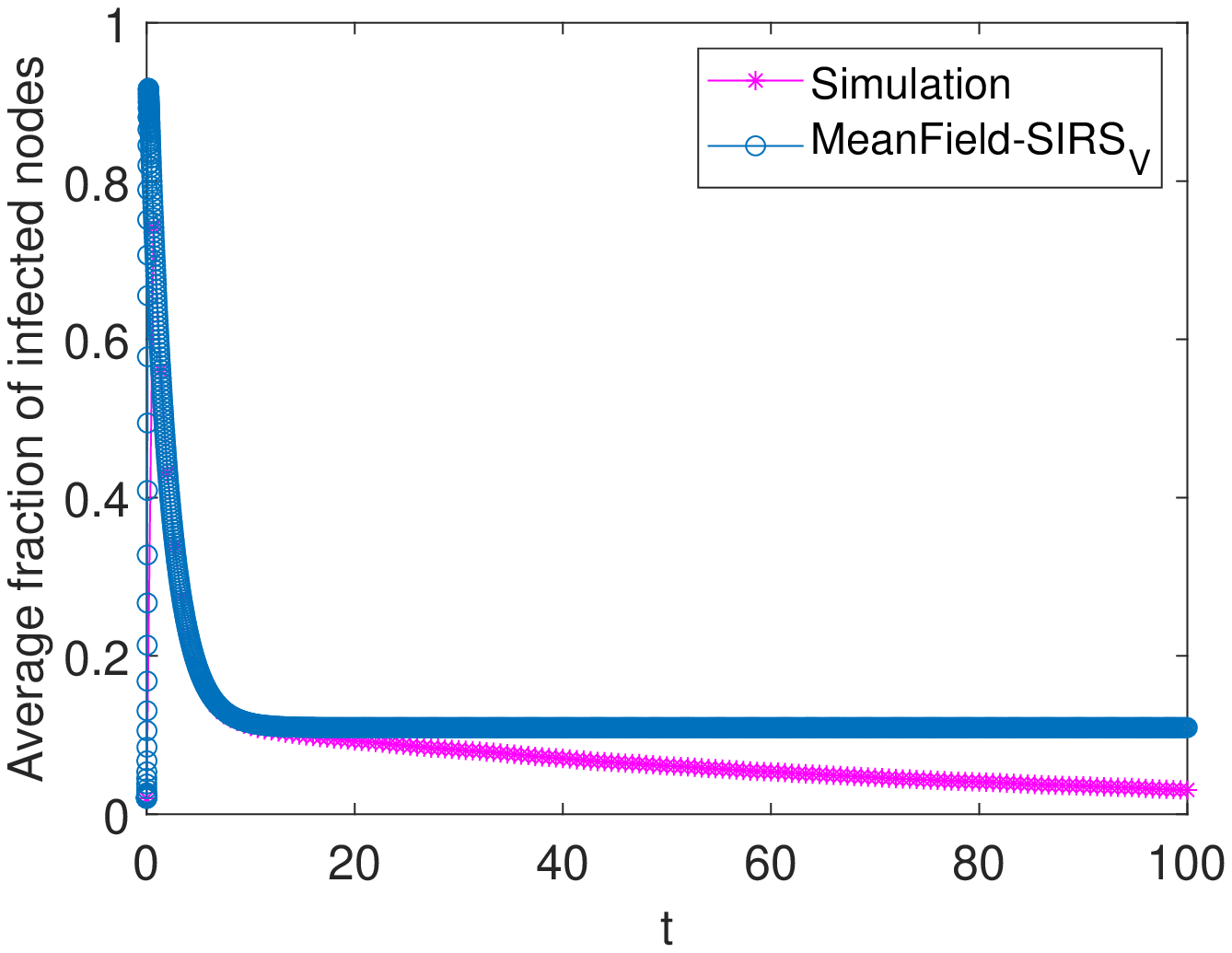}\put(-210,130){c)}
\end{minipage}
    \caption{ \ste{Comparison
between the dynamics of the prevalence for the SIRS$_v$ model, obtained from the numerical solution of \eqref{2dim}, and by averaging $2 \cdot 10^4$ simulated sample paths of
the stochastic process}. 
Complete graph with $N=50$, $\sigma=0.4$.  a) $\beta=0.1$, $\delta=0.9$, $ \gamma=0.1$, $\tau <\tau^{(1)}_{c;SIRS_v}$. b) ,  $\beta=1, \delta=0.45$, $ \gamma=0.2$, $\tau> \tau^{(1)}_{c;SIRS_v}$. c) $\beta=1, \delta=0.45$, $ \gamma=0.06$, $\tau> \tau^{(1)}_{c;SIRS_v}$.  \sO{At time 0 there is one infected node.}}
    \label{fig:Sim-SIRS_V1}
  \end{figure}

%
%
%

 \begin{figure}[h!]
 \centering
    \begin{minipage}{0.4\textwidth}
    \includegraphics[width=\textwidth,height=5cm]{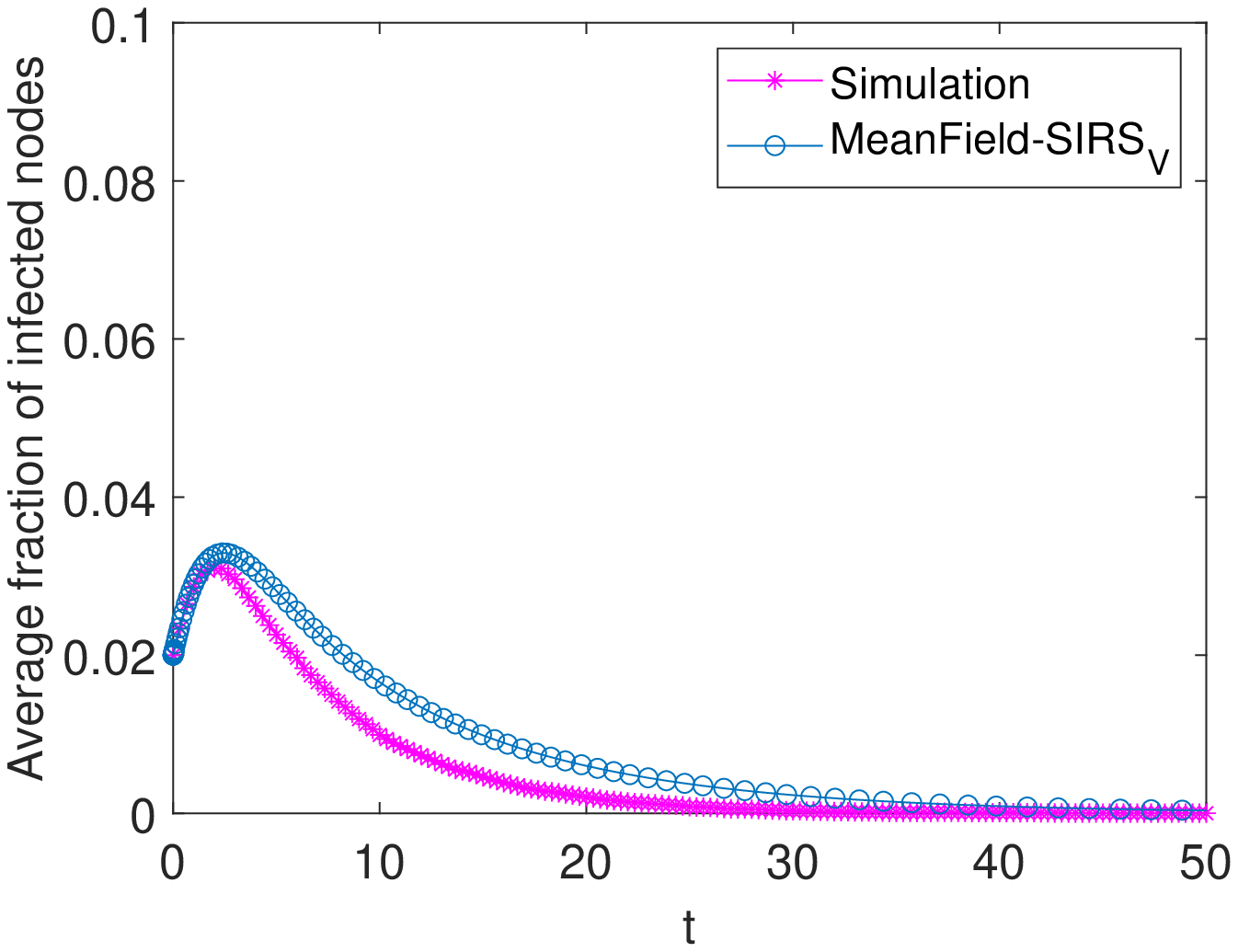}\put(-210,130){a)}
    \end{minipage}
   \hskip4mm
   \begin{minipage}{0.4\textwidth}
    \includegraphics[width=\textwidth,height=5cm]{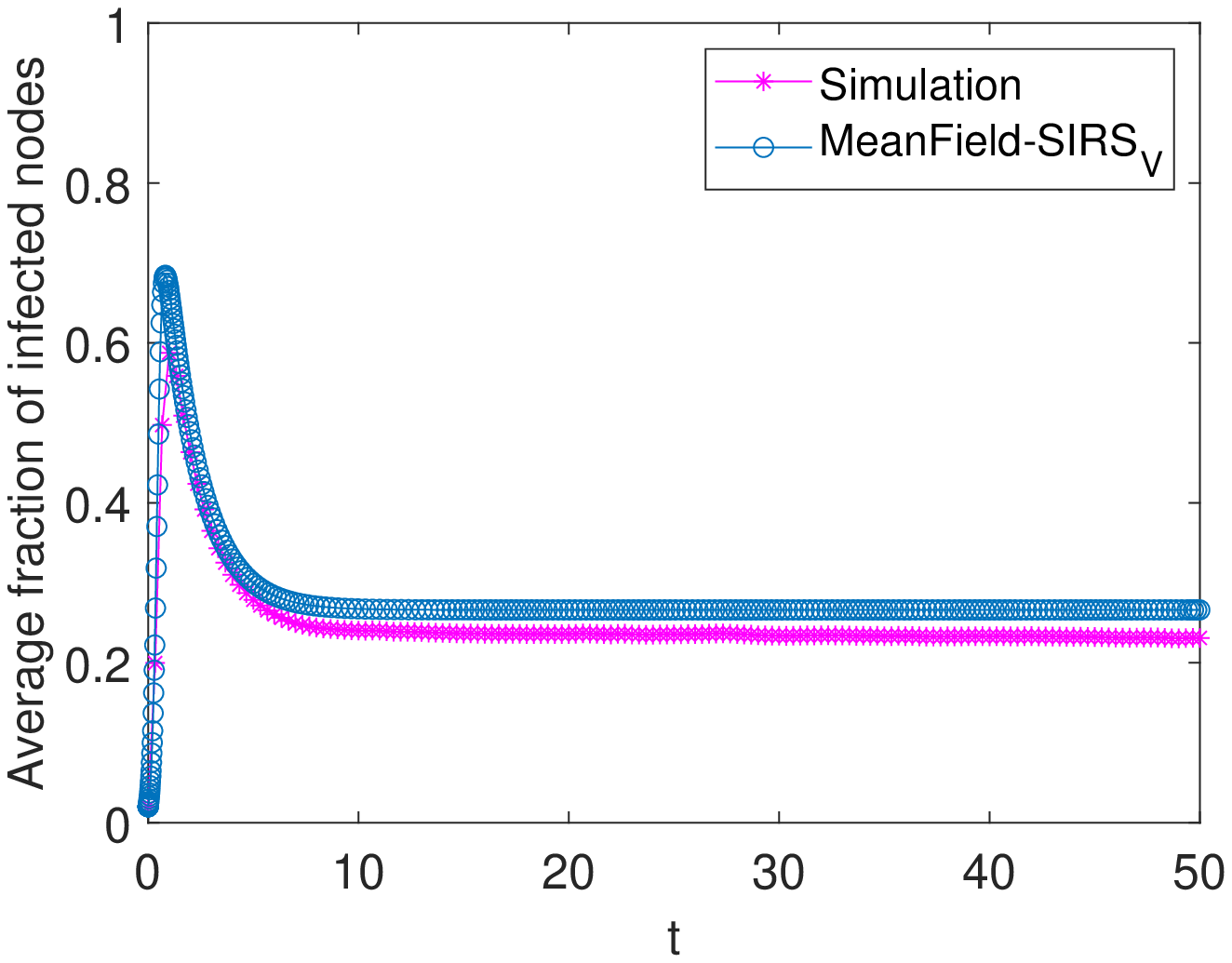}\put(-210,130){b)}
\end{minipage}
 \hskip4mm
   \begin{minipage}{0.4\textwidth}
    \includegraphics[width=\textwidth,height=5cm]{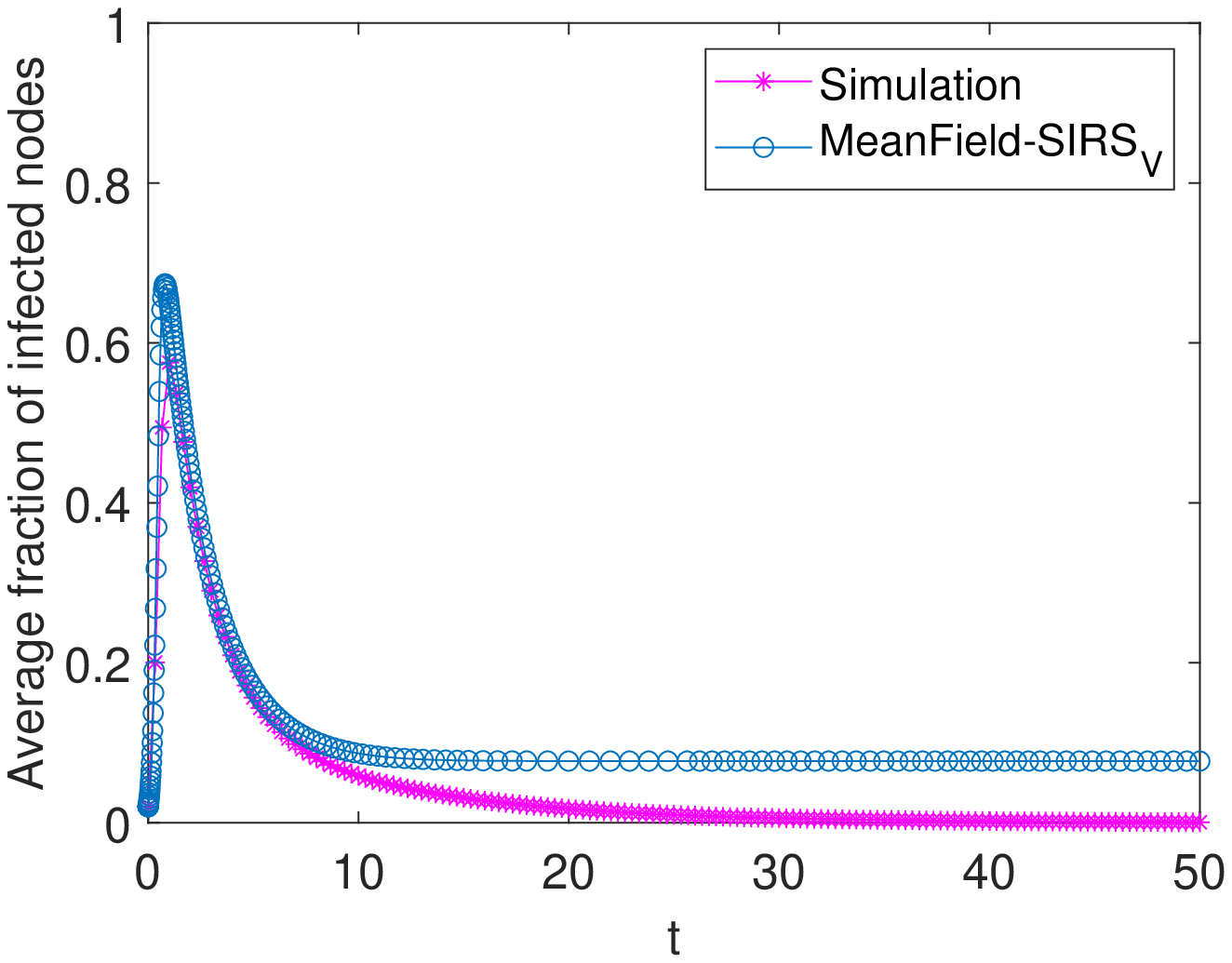}\put(-210,130){c)}
\end{minipage}
    \caption{ \ste{Comparison
between the dynamics of the prevalence for the SIRS$_v$ model, obtained from the numerical solution of \eqref{2dim}, and by averaging $2 \cdot 10^4$ simulated sample paths of
the stochastic process}. Regular graph with $N=50$ and $d_G=10$,  $\sigma=0.45$. a) $\beta=0.1$, $\delta=0.4$, $ \gamma=0.2$, $\tau<\tau^{(1)}_{(c;SIRS_v)}$  b) $\beta=1$, $\delta=0.4$, $ \gamma=0.2$, $\tau>\tau^{(1)}_{(c;SIRS_v)}$.  c) $\beta=1, \delta=0.4$, $ \gamma=0.06$, $\tau> \tau^{(1)}_{(c;SIRS_v)}$. At time 0 there is one infected node.}
    \label{fig:Sim-SIRS_V2}
  \end{figure}



%


\section{Conclusion}

In this work, we started by considering the exact stochastic Markov description of a SIRS model, with vaccination, on networks. In this context, we investigated the mean time of the epidemic. \ste{We found a 
sufficient condition, related to the topological properties of the network and to the model parameters, for the fast extinction  (no more infected), for avoiding a long-term persistence. We provided also some numerical investigations to assess the role of the immunity-loss parameter in the extinction mean time.}


Starting from a node-level description of the exact Markov process, that becomes neither analytically
nor computationally tractable with increasing number of nodes $N$, we derived an approximation of it by means of a first-order meanfield technique.
 We obtained a set of $3N$ nonlinear
differential equations, specifying the state probabilities of each node.
\ste{At this point, we focused on the stability properties of the approximated model. We start considering the stability analysis provided in
\cite{yang2017heterogeneous}, where the authors consider an heterogeneous version of our SIRS$_{v}$ model.
Based on their analysis, we provided the critical threshold, which separates an extinction region from an endemic one, in terms of the parameters and the network topology. In this way, it is made explicit to what extend the threshold and the steady-state solutions are influenced by the value of the immunity-loss parameter, and by the introduction of the vaccination, comparing the results with the basic SIRS model.}

\ste{A noteworthy aspect is that, to the best of our knowledge, the global asymptotic stability (GAS) of the endemic equilibrium for a SIRS model of our kind is still an open question, in that there are only partial results with additional strict conditions on the model parameters. Also in \cite{yang2017heterogeneous} additional  restrictions, dependent on the network topology, are imposed on the model parameters in order to obtain a sufficient condition for the GAS of the endemic equilibrium.
However, we have not been able to find graphs and parameters for which this condition is valid, and even in \cite{yang2017heterogeneous} the authors do not provide numerical examples in which the condition holds. Moreover, 
we show, that in the homogeneous setting, it is never satisfied in the case of regular graphs. }
\ste{For this reason, we analyzed the domain of attraction of the positive equilibrium, at least for these specific graphs, using  the notion of equitable partitions.} 

First, we proved the existence of a positively invariant set for the system when a graph posses an equitable partition, for which, when the initial conditions belong to this set, the whole epidemic dynamics can be expressed by a reduced system with respect to the starting one \eqref{2dim}. This reduced system can be used for the computation of the endemic equilibrium that belongs to this invariant set (see Remark \ref{remSteady}). \ste{This result is interesting in its own right, since it extends to a SIRS-type model what have been found for the SIS model in \cite{QEP}}.

Since a regular graph is a special case of graph with equitable partition, we showed that, when the recovery rate is higher than the vaccination rate, the aforementioned invariant set is contained in the domain of attraction of the endemic equilibrium (Thm. \ref{attractivity}). 

\ste{We also provided numerical investigations. First, we investigated the influence of the {immunity-loss} rate $\gamma$ and the vaccination rate $\sigma$ on the dynamics of the prevalence, and hence on the steady-state average fraction of infected nodes. Once fixed the graph and the other model parameters involved, we increased the value of $\sigma$, and as expected, the steady-state average fraction of infected nodes decreases. 
to a region of extinction. Vice versa, fixed $\sigma$, the increase of $\gamma$ leads to an increased steady-state prevalence, highlighting how a shorter immunity period leads to a more aggressive epidemic.}

\ste{Then, we reported the dynamics of infection and recovery probabilities of two selected nodes in a regular graph, and compared the trajectories of these probabilities starting outside the invariant set containing the endemic equilibrium \eqref{inv_set} (not all nodes have equal initial conditions) with those starting within the invariant set (all nodes have the same initial conditions). This numerical simulation says us something more than Thm. \ref{attractivity}, namely, also trajectories that start outside the invariant set, as time goes on, tend to approach those starting within, and finally reach the endemic equilibrium.}

\ste{ Finally, we compared the average behaviour of the exact stochastic SIRS$_{v}$ model with the approximated one. We can see, how, as we conjectured, the dynamics of the prevalence of the mean-field model tends stays slightly
 above that of the stochastic one, but specially for the case of the complete graph, in some region parameters, there si quite a perfect match between the two dynamics. However, there are a region parameter where a
different qualitative behavior between the two model appears after a certain point in time. Indeed, the exact prevalence, after reaching the peak, starts to decrease towards the state with no infected quite early, while in the approximate model, the infection remains persistent.}

\ste{In view of our results, we think that there are a lot of interesting aspects about the SIRS model (with vaccination) on network that can be further investigated, both for the stochastic case, and its mean-field approximation.}

\section*{Acknowledgment}

The research of Stefania Ottaviano was supported by ISSTN, Istituto di Scienze della Sicurezza, University of Trento. \\
This work does not have any conflicts of interest.

\end{document}